\documentclass[11pt]{amsart}

\usepackage{epigamath}


\usepackage[english]{babel}


\numberwithin{equation}{section}


\usepackage{enumitem}
\usepackage{mathrsfs,bm,ddag}
\usepackage[all]{xy}


\newtheorem{thm}[subsection]{Theorem}
\newtheorem{lem}[subsection]{Lemma}
\newtheorem*{lem*}{Lemma}

\newtheorem*{cor*}{Corollary}

\newtheorem*{prop*}{Proposition}

\newtheorem*{conj*}{Conjecture}
\newtheorem*{thm*}{Theorem}

\theoremstyle{definition}

\newtheorem{dfn}[subsection]{Definition}
\newtheorem*{dfn*}{Definition}

\theoremstyle{remark}

\newtheorem*{ex*}{Example}
\newtheorem{rem}[subsection]{Remark}
\newtheorem*{rem*}{Remark}




\newcommand{\SH}{\mc{SH}}

\newcommand{\ul}[1]{\underline{#1}}

\newcommand{\PrLst}{\mc{P}\mr{r}^{\mr{L}}_{\mr{st}}}
\newcommand{\PrL}{\mc{P}\mr{r}^{\mr{L}}}

\newcommand{\rHbm}{\mr{H}^{\mr{BM}}}
\newcommand{\ulrHbm}{\ul{\mr{H}}^{\mr{BM}}}
\newcommand{\sHbm}{\mc{H}^{\mr{BM}}}
\newcommand{\Sp}{\mc{S}\mr{p}}

\newcommand{\Hom}{\mr{Hom}}
\newcommand{\Hc}{\mc{H}_{\mr{c}}^*}
\renewcommand{\H}{\mc{H}}

\newcommand{\Arr}{\widetilde{\mr{Ar}}}
\newcommand{\Shv}{\mc{S}\mr{hv}}
\newcommand{\PShv}{\mc{P}}
\newcommand{\Mod}{\mr{Mod}}
\newcommand{\App}{\mr{App}}
\newcommand{\Tri}{\mc{T}\mr{ri}}


\EpigaVolumeYear{7}{2023} \EpigaArticleNr{7}
\ReceivedOn{June 27, 2022}
\InFinalFormOn{October 4, 2022}
\AcceptedOn{November 18, 2022}

\title{Trace formalism for motivic cohomology}
\titlemark{Trace formalism for motivic cohomology}

\author{Tomoyuki~Abe}
\address{Kavli Institute for the Physics and Mathematics of the Universe (WPI), University of Tokyo, 5-1-5 Kashiwanoha,  Kashiwa, Chiba, 277-8583, Japan}
\email{tomoyuki.abe@ipmu.jp}

\authormark{T.~Abe}

\AbstractInEnglish{A goal of this paper is to construct trace maps for the six functor formalism of motivic cohomology after Voevodsky, Ayoub,
 and Cisinski--D\'{e}glise.
 We also construct an $\infty$-enhancement of such a trace formalism.
 In the course of the $\infty$-enhancement, we need to reinterpret the trace formalism in a more functorial manner.
 This is done by using Suslin--Voevodsky's relative cycle groups.}

\MSCclass{14F42, 18N60}

\KeyWords{Motivic cohomology, Trace map, infinity enhancement}


\acknowledgement{This work is supported by JSPS KAKENHI Grant Numbers 16H05993, 18H03667, 20H01790.}




\begin{document}



\maketitle

\begin{prelims}

\DisplayAbstractInEnglish

\bigskip

\DisplayKeyWords

\medskip

\DisplayMSCclass







\end{prelims}


\newpage

\setcounter{tocdepth}{1}

\tableofcontents


\section{Introduction}
Let $f\colon X\rightarrow S$ be a flat morphism of dimension $d$ between schemes of finite type over a field $k$.
Let $\Lambda$ be a torsion ring in which the exponential characteristic of $k$ is invertible.
In \cite[Expos\'e~XVIII, Th\'eor\`eme~2.9]{SGA}, the trace map $\mr{Tr}_f\colon \mr{R}f_!f^*\Lambda(d)[2d]\rightarrow\Lambda$
satisfying various functorial properties is constructed.
Here, the cohomological functors are taken for the \'{e}tale topoi.
Furthermore, the trace map is characterized by such functorialities.
This trace map is fundamentally important, and for example, it is used to construct the cycle class map.
In other words, we may view the trace formalism as a device to throw cycle-theoretic information into the cohomological framework.
The main goal of this paper is to construct an analogous map for the motivic cohomology of Voevodsky, and its $\infty$-enhancement.
The $\infty$-enhancement of the trace formalism will serve as an interface between
``actual cycle'' and ``$\infty$-enhancement of motivic cohomology'' in \cite{A2}.

Let us explain the method to construct the trace formalism.
From now on, we consider the six functor formalism of the motivic cohomology theory with coefficients in $\Lambda:=\mb{Z}[1/p]$,
where $p$ is the characteristic of our base field $k$.
The principle that makes the construction of the trace map work is the observation that the higher homotopies vanish.
More precisely, we have
\begin{equation}
 \label{vanhighhom}
 \mr{R}^i\mr{Hom}\bigl(\mr{R}f_!f^*\Lambda(d)[2d],\Lambda\bigr)=0
\end{equation}
for $i<0$.
A benefit of this vanishing is that if we take an open subscheme $j\colon U\hookrightarrow X$ such that $U_s\subset X_s$ is dense for any $s\in S$,
then constructing $\mr{Tr}_f$ and constructing $\mr{Tr}_{f\circ j}$ are equivalent.
In \cite{SGA}, this property is used ingeniously to reduce the construction to simpler situations.
Another benefit which is more important for us is that the vanishing allows us to construct the map ``locally''.
Namely, by the vanishing, constructing $\mr{Tr}_f$ is equivalent to constructing a morphism
$\mr{R}^{2d}f_!f^*\Lambda(d)\rightarrow\Lambda$ of {\em sheaves}.
In the case of \'{e}tale cohomology, since it admits proper descent,
by de Jong's alteration theorem, the construction is reduced to the case where $S$ is smooth.
We note that we commonly use de Jong's alteration theorem to reduce proving {\em properties} to smooth cases,
but to reduce {\em constructions} to smooth cases needs control of higher homotopies, which requires great amount of effort in general.
In the case where $S$ is smooth, the construction is easy because we have an isomorphism
$\mr{Hom}\bigl(\mr{R}f_!f^*\Lambda(d)[2d],\Lambda\bigr)\cong\mr{Hom}\bigl(\mr{R}p_{X!}p_X^*\Lambda(d_X)[2d_X],\Lambda\bigr)$,
where $p_X$ is the structural morphism for $X$ and $d_X:=\dim(X)$, using the relative Poincar\'{e} duality,
namely the isomorphism $p_S^*(d_S)[2d_S]\cong p_S^!$.
In the case of \'{e}tale cohomology, in \cite{SGA}, the relative Poincar\'{e} duality theorem is established by using the trace formalism,
and the argument we explained here is somewhat circular.
However, in the theory of motives, the relative Poincar\'{e} duality follows from theorems of Morel--Voevodsky, Ayoub,
and Cisinski--D\'{e}glise which use completely different methods, and the above argument actually works.

Now, assume we wish to enhance the trace map $\infty$-categorically.
The first question that immediately comes up with is that what it means by ``$\infty$-enhancement'' in this situation.
To address the question, we need a reinterpretion of the trace map, and to motivate our reinterpretation, let us discuss a defect of traditional formalism.
Let $f$ be a flat morphism between {\em non-reduced} schemes such that $f_{\mr{red}}$ is {\em not} flat.
In this situation, we have the trace map $\mr{Tr}_f$.
However, since motivic or \'{e}tale cohomology is insensitive to nil-immersions, $\mr{Tr}_f$ induces a similar map for $f_{\mr{red}}$.
This observation gives us an impression that the trace map should be associated with a ``cycle'' rather than a ``scheme''.
To realize this idea, we use the relative cycle group of Suslin and Voevodsky.
For a morphism $f\colon X\rightarrow S$, they defined a group denoted by $z_{\mr{equi}}(X/S,d)$
which is a certain subgroup of the group of cycles in $X$ equidimensional of dimension $d$ over $S$ (see \cite{SV}).
When $f$ is flat of dimension $d$, the cycle $[X]$ is an element of $z_{\mr{equi}}(X/S,d)$.
Using these observations, we show that there exists a morphism $z_{\mr{equi}}(X/S,n)\rightarrow\mr{Hom}\bigl(\mr{R}f_!f^*\Lambda(n)[2n],\Lambda\bigr)$
for any $n$, such that, when $f$ is flat of dimension $d$, the image of $[X]\in z_{\mr{equi}}(X/S,d)$ is the traditional trace map.
The object $\mr{Hom}\bigl(\mr{R}f_!f^*\Lambda(n),\Lambda\bigr)$ is often called the {\em Borel--Moore homology},
and is denoted by $\rHbm(X/S,\Lambda(n))$.
Note that we are considering it as an object of the derived category (or as a spectrum).
The associations $z_{\mr{equi}}(X/S,n)$ and $\rHbm(X/S,\Lambda(n))$ to $X/S$ are functorial with respect to the base changes of $S$
and pushforwards along proper morphisms $X\rightarrow X'$ over $S$.
These functorialities yield ($\infty$-)functors from a certain category $\widetilde{\mr{Ar}}$ to the $\infty$-category of spectra $\Sp$.
The $\infty$-enhancement of the trace map can be formulated as a natural transform between these $\infty$-functors,
and we will show the existence of such an $\infty$-functor in the last section.
This $\infty$-enhancement of the trace map is one of the crucial ingredients in \cite{A2}.

Before concluding the introduction, let us present the organization of this paper.
In Section~\ref{sec1}, we recall the six functor formalism of the theory of motives after Voevodsky, Ayoub, Cisinski--D\'{e}glise.
In Section~\ref{sec2}, we formulate our main result.
To describe the functoriality of $z_{\mr{equi}}(X/S,n)$ and $\rHbm(X/S,\Lambda(n))$ above,
it is convenient to use the language of ``bivariant theory'' after Fulton--MacPherson.
We start by recalling such a theory, and we state our main theorem.
We conclude this section by showing an analogue of (\ref{vanhighhom}) in the motivic setting.
In Section~\ref{sec3}, we construct the trace map in the case where the base scheme $S$ is smooth.
In Section~\ref{pfmthm}, we construct the trace map in general and show the main result.
In Section~\ref{sec5}, we establish the $\infty$-enhancement.
We note that, even though we use the language of $\infty$-categories throughout the paper for convenience and coherence,
it is straightforward to formulate and prove the results of Sections~\ref{sec1} to~\ref{pfmthm}
using the language of model categories, as in \cite{CD,CDbook}.
Using the language of $\infty$-categories  is more essential in Section~\ref{sec5}.

\subsection*{Acknowledgment}
\leavevmode
\medskip

The author is grateful to Deepam Patel for numerous discussions, without which this paper would not have been written.
He wishes to thank Adeel Kahn for various helpful comments on the paper.
Especially, Remark~\ref{mainrem}-\eqref{mainrem-4} is due to him.
He also thanks Fr\'{e}d\'{e}ric D\'{e}glise for answering several questions, and Shane Kelly for some discussions.
Finally, he wishes to thank the referee for reading the manuscript very carefully, and gave him numerous comments which helped to improve the quality of the paper.

\subsection*{Notation and conventions}
\leavevmode
\medskip

We fix a perfect field $k$ of characteristic $p>0$.
By $\infty$-category, we always mean $(\infty,1)$-category, and by category we always mean $1$-category.
For a scheme $S$, we denote by $\mr{Sch}_{/S}$ the category of schemes separated of finite type over $S$.
When $S=\mr{Spec}(k)$, we often denote this by $\mr{Sch}_{/k}$.

\section{Review of six functors}
\label{sec1}

\subsection{}
\label{fixcoh}
We will use the language of $\infty$-categories, but except for \S\ref{sec5}, this is used just to facilitate the presentation.
See the remark at the end of this paragraph for some explanation.

Let $\PrLst$ be the full subcategory of $\PrL$ (\emph{cf.}~\cite[Definition 5.5.3.1]{HTT}) spanned by stable $\infty$-categories.
We have the functor $\SH\colon\mr{Sch}_{/k}^{\mr{op}}\rightarrow\PrLst$ sending $T$ to Voevodsky-Morel's stable homotopy $\infty$-category $\SH(T)$
(\emph{cf.}~\cite[\S2.1]{CD} or \cite[Example 1.4.3]{CDbook} for model categorical treatment and \cite[\S6.7]{A} and references therein for $\infty$-categorical treatment).
Let $\Lambda$ be a commutative ring.
Then Voevodsky defined the motivic Eilenberg-MacLane spectrum $\mb{H}\Lambda_k$, which is an $\mb{E}_\infty$-algebra of $\SH(\mr{Spec}(k))$.
By pulling back, this spectrum yields a spectrum $\mb{H}\Lambda_{T/k}$ on $\SH(T)$,
and defines an ``absolute ring $\SH$-spectrum'' in the sense of \cite[Definition 1.1.1]{Deg}.
The absolute ring $\SH$-spectrum $\mb{H}\Lambda_{T/k}$ is equipped with an ``orientation'' in the sense of \cite[Definition 2.2.2]{Deg} by \cite[Example 2.2.4]{Deg}.
Under this situation, all the results of \cite[Introduction, Theorem~1]{Deg} can be applied.
We do not try to recall the definitions of each terminology, but instead, we sketch what we can get by fixing these data.

We put $\mc{D}_T:=\Mod_{\mb{H}\Lambda_T}(\SH(T))$, the symmetric monoidal $\infty$-category of $\mb{H}\Lambda_T$-module objects in $\SH(T)$.
Then the assignment $\mc{D}_T$ to $T$ can be promoted to a functor $\mc{D}\colon\mr{Sch}_{/S}^{\mr{op}}\rightarrow\PrLst$
which yields ``motivic categories'' in the sense of \cite{CDbook}.
This can be checked from \cite[Proposition 5.3.1 and Proposition~7.2.18]{CDbook}.
We may find a summary of the axioms of what this means in \cite[\S6.1]{A}, and also references.
Among other things, we may use ``six functors''.
In this $\infty$-categorical context, we can find a construction of six functor formalism in \cite[\S6.8]{A},
which follows the idea of \cite{Khan}.
Let $X\in\mr{Sch}_{/S}$.
Then $\mc{D}_X$ is a symmetric monoidal stable $\infty$-category.
Given a morphism $f\colon X\rightarrow Y$ in $\mr{Sch}_{/S}$, the functor $\mc{D}$ induces the functor $\mc{D}_Y\rightarrow\mc{D}_X$,
which we denote by $f^*$ in accordance with the six functor formalism of Grothendieck.
The functor $f^*$ admits a right adjoint, which we denote by $f_*$.
We also have the ``extraordinary pushforward functor'' $f_!\colon\mc{D}_X\rightarrow\mc{D}_Y$ as well as its right adjoint $f^!$.
We have the natural transform $f_!\rightarrow f_*$ which is an isomorphism when $f$ is proper.

The orientation on $\mb{H}\Lambda$ yields an orientation on $\mc{D}$ in the sense of \cite[Definition~2.4.12]{CDbook} by \cite[Example~2.4.40]{CDbook} and \cite[\S2.2.5]{Deg}.
For $n\in\mb{Z}$, we denote the $n$-th Tate twist by $(n)$, the $n$-th shift by $[n]$, and $(n)[2n]$ by $\left<n\right>$.
We often denote the unit object of $\mc{D}_T$ by $\Lambda_T$.
By fixing an orientation, we have a canonical isomorphism $f^*(d)[2d]\cong f^!$ for any smooth morphism $f$ in $\mr{Sch}_{/S}$ (\emph{cf.}~\cite[Theorem~2.4.50]{CDbook}).
In fact, the fundamental class constructed in \cite[Introduction, Theorem~1]{Deg} can be seen as a generalization of this isomorphism.

\begin{rem*}
 If the reader feels uncomfortable with using $\infty$-categories,
 it is essentially harmless to replace $\PrLst$ by the the $(2,1)$-category of triangulated categories $\Tri$ above.
 Then, we may regard $\mc{D}_T$ as a triangulated category.
 The only exception might be that when we consider descents.
 In order to consider descents inside the traditional framework, we need to introduce the category of diagrams as in \cite[\S3]{CDbook}.
 Therefore, strictly speaking, simply considering the functor $\mr{Sch}_{/S}^{\mr{op}}\rightarrow\Tri$ is not enough for the theory of descent.
 We leave the details to the interested reader.
\end{rem*}

\subsection{}
Let $f\colon X\rightarrow S$. For $\mc{F}\in\mc{D}_S$, we set
\[
 \Hc(X/S,\mc{F}):=f_!f^*\mc{F},\quad
 \H^*(X/S,\mc{F}):=f_*f^*\mc{F}\quad\text{and}\quad
 \rHbm(X/S,\mc{F}):=\Hom_{\mc{D}_S}
 \bigl(\Hc(X/S,\mc{F}),\Lambda_S\bigr).
\]
Here, we view $\rHbm$ as a spectrum.
When the coefficient ring $\Lambda$ is obvious, we abbreviate $\rHbm(X/S,\Lambda(n))$ by $\rHbm(X/S,n)$.
We write $\rHbm_m(X/S,\mc{F})$ for $\pi_m\rHbm(X/S,\mc{F})$, and call it the {\em Borel-Moore homology}.
Note that $\pi_m\rHbm(X/S,n)$ coincides with $(\mb{H}\Lambda)^{\mr{BM}}_{m,n}(X/S)$ in \cite{Deg}.
Assume we are given a closed subscheme $Z\subset X$ and denote the complement by $U$.
By localization sequence of 6-functor formalism, we have the long exact
sequence
\begin{equation*}
 \cdots\longrightarrow
 \rHbm_m(Z/S,\mc{F})
  \longrightarrow
  \rHbm_m(X/S,\mc{F})
  \longrightarrow
  \rHbm_m(U/S,\mc{F})
  \longrightarrow
  \rHbm_{m-1}(Z/S,\mc{F})
  \longrightarrow\cdots.
\end{equation*}

\subsection{}
\label{GTres}
We introduce the $p$dh-topology as follows.

\begin{dfn*}
 We define {\em $p$dh-topology} on $\mr{Sch}_{/k}$ to be the topology generated by the following two types of families:
 \begin{enumerate}
  \item $\{f\colon Y\rightarrow X\}$, where $f$ is finite surjective flat morphism of constant degree power of $p$;

  \item cdh-covering.
 \end{enumerate}
\end{dfn*}
We call $\ell'$dh-topology what is called $\ell$dh-topology in \cite[\S5.2]{CD}.
Obviously, cdh-topology is coarser than $p$dh-topology, and $p$dh-topology is coarser than $\ell'$dh-topology for any $\ell\neq p$.

Let $S$ be an object of $\mr{Sch}_{/k}$.
Recall that the theorem of Temkin  \cite{Tem}, which is a refinement of Gabber's prime-to-$\ell$ alteration theorem, states as follows:
there exists an alteration $S'\rightarrow S$ whose {\em generic degree is some power of $p$} and $S'$ is smooth.
Without Temkin's theorem, $p$dh-topology might have been useless, but armed with the theorem, we can show the following statement as usual.

\begin{lem*}
 For any $S\in\mr{Sch}_{/k}$, there exists a $p$dh-covering $f\colon T\rightarrow S$ such that $T$ is a smooth $k$-scheme.
 We may even take $f$ to be proper.
\end{lem*}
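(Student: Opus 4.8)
The plan is to reduce at once to the case of reduced $S$ and then argue by Noetherian induction (on the dimension of $S$ and, for fixed dimension, the number of irreducible components), using Temkin's theorem \cite{Tem} together with a flattening argument. For the first reduction: the nil-immersion $S_{\mr{red}}\hookrightarrow S$ is a proper cdh-covering (it is the projection in an abstract blow-up square with empty strict transform), so composing a proper $p$dh-covering of $S_{\mr{red}}$ by a smooth scheme with $S_{\mr{red}}\hookrightarrow S$ proves the lemma for $S$. So assume $S$ reduced. If $S$ is reducible, choose an irreducible component $S_1$ of maximal dimension, put $S'':=\overline{S\setminus S_1}$ and $W:=S_1\cap S''$, so that $\dim W<\dim S$; then $\{S_1\sqcup S''\to S,\ W\hookrightarrow S\}$ is the cdh-covering of an abstract blow-up square along $W$, and applying the induction hypothesis to $S_1$, $S''$ and $W$ and composing with the evident proper morphisms to $S$ (and taking a disjoint union) proves the lemma for $S$. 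Hence we may assume $S$ integral.

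Now Temkin's theorem yields an alteration $a\colon X\to S$ with $X$ integral, $a$ proper, and $X$ regular — hence smooth over $k$, since $k$ is perfect — of generic degree $p^n$ for some $n\ge 0$. By generic finiteness of $a$ together with generic flatness there is a dense open $U\subseteq S$ over which $a$ is finite locally free of constant degree $p^n$; let $Z:=S\setminus U$ with its reduced structure, a closed subscheme with $\dim Z<\dim S$, and let $\iota\colon Z\hookrightarrow S$ be the inclusion.

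The key step is the assertion that $\{a\colon X\to S,\ \iota\colon Z\hookrightarrow S\}$ is a $p$dh-covering. Apply Raynaud--Gruson flattening to the finite-type morphism $a$, which is flat over $U$: there is a blow-up $\pi\colon\widetilde S\to S$ whose centre is contained in $Z$, so $\pi$ is an isomorphism over $S\setminus Z=U$, such that the strict transform $\widetilde X\subseteq X\times_S\widetilde S$ of $X$ — i.e.\ the scheme-theoretic closure of the integral open subscheme $a^{-1}(U)$ — is flat over $\widetilde S$. Then $\{\pi\colon\widetilde S\to S,\ \iota\colon Z\hookrightarrow S\}$ is the cdh-covering of an abstract blow-up square, hence a $p$dh-covering. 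The morphism $\widetilde X\to\widetilde S$ is proper (it is closed in the $\widetilde S$-proper scheme $X\times_S\widetilde S$), flat, and dominant, and over $U$ it is the finite flat degree-$p^n$ morphism $a^{-1}(U)\to U$; since $\widetilde X$ and $\widetilde S$ are integral of the same dimension, its fibres are $0$-dimensional by the dimension formula for flat morphisms, so $\widetilde X\to\widetilde S$ is quasi-finite, hence finite, hence finite locally free of constant degree $p^n$; being surjective, it is thus a $p$dh-covering (of the first kind). Pulling the family $\{a,\iota\}$ back along $\{\pi,\iota\}$: over $Z$ the pullback contains $\mathrm{id}_Z$, and over $\widetilde S$ it contains $X\times_S\widetilde S\to\widetilde S$, which is refined by the covering $\widetilde X\to\widetilde S$; in both cases the pullback is a covering, so by the locality and transitivity of the topology generated by the two families in the definition, $\{a\colon X\to S,\ \iota\colon Z\hookrightarrow S\}$ is a $p$dh-covering.

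Finally, by the induction hypothesis $Z$ admits a proper $p$dh-covering $q\colon Z'\to Z$ with $Z'$ smooth over $k$; then $\iota q\colon Z'\to S$ is proper with smooth source, $\{a\colon X\to S,\ \iota q\colon Z'\to S\}$ is a proper $p$dh-covering of $S$ by smooth $k$-schemes, and $f\colon T:=X\sqcup Z'\to S$ is the single proper $p$dh-covering with $T$ smooth over $k$ that is sought (the two-member family refines $\{f\}$). I expect the flattening step to be the main obstacle: its real content is that a smooth alteration of $p$-power degree — which is finite flat only over a \emph{dense} open — becomes, after a cdh-localization supplied by Raynaud--Gruson flattening, an honest covering of the first kind, the flat strict transform being automatically finite for dimension reasons; everything else is standard Noetherian-induction bookkeeping.
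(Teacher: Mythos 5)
Your proposal is correct and follows essentially the same route as the paper: Temkin's $p$-power-degree alteration, Raynaud--Gruson flattening to turn its strict transform into a finite flat $p$-power-degree (hence $p$dh) cover over a modification, and Noetherian induction on dimension to handle the center/bad locus, finally taking a disjoint union. Your extra reductions to reduced and integral $S$ and your explicit verification that the strict transform is finite (and that $\{a,\iota\}$ is a covering via pullback along $\{\pi,\iota\}$) only spell out steps the paper treats as standard.
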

\begin{proof}
 Even though the argument is standard, we recall a proof for the sake of completeness.
 We use the induction on the dimension of $S$.
 Using Temkin's theorem, take an alteration $T_1\rightarrow S$ whose generic degree is power to $p$ and $T_1$ is smooth.
 By using Gruson-Raynaud's flattening theorem, we may take a modification $S'\rightarrow S$ with center $Z\subset S$ such that the strict transform $T_2$ of $T_1$ is flat over $S'$.
 By construction $T_2\rightarrow S'$ is finite surjective flat morphism whose degree is power to $p$, and thus, $\{T_2\rightarrow S'\}$ is a $p$dh-covering.
 By induction hypothesis, we may find a proper $p$dh-covering $W\rightarrow Z$ such that $W$ is smooth.
 Because $\{Z,S'\rightarrow S\}$ is a $p$dh-covering, $\{W,T_2\rightarrow S\}$ is also a $p$dh-covering.
 This covering factors through $\{W,T_1\rightarrow S\}$, so the latter is a $p$dh-covering as well.
 Thus, we may simply take $T:=W\coprod T_1$.
\end{proof}
For any $S\in\mr{Sch}_{/k}$, we may find a $p$dh-hypercovering $S_{\bullet}\rightarrow S$ such that $S_i$ is $k$-smooth
by standard use of the lemma above and \cite[Expos\'e~$\mr{V}^{\mr{bis}}$, Proposition~5.1.3]{SGA}.


\subsection{}
\label{pdfdes}
We have the following $p$dh-descent, which is a straightforward corollary of a $\ell'$dh-descent result by S.~Kelly.

\begin{lem*}
 {\em Assume $p^{-1}\in\Lambda$}.
 Then any object of $\mc{D}_S$ satisfies $p$dh-descent.
 In other words, if we are given a $p$dh-hypercovering $p_\bullet\colon S_\bullet\rightarrow S$ and $\mc{F}\in\mc{D}_S$,
 the canonical morphism $\mc{F}\rightarrow\invlim_{i\in\mbf{\Delta}}p_{i*}p_i^*\mc{F}$ in the $\infty$-category $\mc{D}_S$ is an equivalence.
\end{lem*}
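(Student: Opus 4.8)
The plan is to obtain the statement as a formal consequence of S.~Kelly's $\ell'$dh-descent theorem, using the fact recorded above that the $p$dh-topology is coarser than the $\ell'$dh-topology for every prime $\ell\neq p$. Fix such an $\ell$. Then every $p$dh-covering family is in particular an $\ell'$dh-covering family, so every $p$dh-hypercovering $p_\bullet\colon S_\bullet\rightarrow S$ is an $\ell'$dh-hypercovering; consequently the requirement that $\mc{F}\rightarrow\invlim_{i\in\mbf{\Delta}}p_{i*}p_i^*\mc{F}$ be an equivalence for all $p$dh-hypercoverings is a special case of the same requirement for all $\ell'$dh-hypercoverings. It therefore suffices to prove that every object of $\mc{D}_S$ satisfies $\ell'$dh-descent.

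This last point has two ingredients. First, $\ell'$dh-(hyper)descent for the motivic categories with $\mb{Z}[1/p]$-coefficients is the theorem of Kelly and Cisinski--D\'{e}glise (see \cite{CD,CDbook}), which rests on Gabber's alteration theorem and Voevodsky's cdh-descent machinery. Second, the case of a general coefficient ring $\Lambda$ with $p^{-1}\in\Lambda$ reduces to the case $\Lambda=\mb{Z}[1/p]$ by extension of scalars: since $p$ is invertible, $\mb{Z}[1/p]$ maps to $\Lambda$, hence $\mb{H}\Lambda$ is an algebra over the motivic Eilenberg--MacLane spectrum with $\mb{Z}[1/p]$-coefficients, and $\mc{D}_S$ is canonically a module category over the corresponding category $\mc{D}'_S$; the forgetful functor $\mc{D}_S\rightarrow\mc{D}'_S$ is conservative, preserves limits, and is compatible with the operations $p_i^*$ and $p_{i*}$, so that $\ell'$dh-descent in $\mc{D}'_S$ forces it in $\mc{D}_S$. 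Combining the two ingredients with the reduction of the previous paragraph proves the lemma.

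There is no genuine obstacle internal to this lemma: the entire weight of the argument sits in Kelly's descent theorem (and, trivially, in the comparison of the $p$dh- and $\ell'$dh-topologies recorded in the text); within the lemma itself the only points needing care are that the $p$dh-hypercoverings form a subclass of the $\ell'$dh-hypercoverings and that the coefficient reduction is compatible with limits. For a reader who prefers to see the mechanism directly rather than pass through $\ell'$dh, one can instead check descent for the two generating families of the $p$dh-topology separately. The cdh-descent for $\mc{D}_S$ is again Kelly's theorem. For a finite surjective flat morphism $f\colon Y\rightarrow X$ of constant degree $d$ a power of $p$, the transfer provides a natural map $f_*f^*\rightarrow\mr{id}$ whose composition with the unit $\mr{id}\rightarrow f_*f^*$ is multiplication by $d$; since $p^{-1}\in\Lambda$ this $d$ is invertible, so $\mr{id}_{\mc{D}_X}$ is a retract of the endofunctor $f_*f^*$, stably under base change. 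Hence $f^*$ is conservative and $f$ is a morphism of universal $\mc{D}$-descent, which yields the descent equivalence for the \v{C}ech nerve of $f$. Either way, the only truly external input is cdh- (equivalently $\ell'$dh-) descent, i.e., Kelly's theorem.
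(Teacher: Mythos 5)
Your main route has a genuine gap at its key step. Kelly's theorem, in the form used in this paper (\cite[Theorem~5.10]{CD}), gives $\ell'$dh-descent for \emph{$\mb{Z}_{(\ell)}$-linear} objects, not for objects with $\mb{Z}[1/p]$-coefficients. With $\Lambda=\mb{Z}[1/p]$ the statement ``every object of $\mc{D}_S$ satisfies $\ell'$dh-descent'' is not available (and should fail): the $\ell'$dh-topology contains finite flat surjective covers of degree $q$ for any prime $q\neq\ell$, and for $q\neq p,\ell$ such a degree is not invertible in $\mb{Z}[1/p]$, so the transfer argument underlying Kelly's theorem breaks. This is precisely why the paper does not argue as you do: it forms the cofiber $\mc{C}=\mr{cofib}\bigl(\mc{F}\rightarrow\invlim_{i\in\mbf{\Delta}}p_{i*}p_i^*\mc{F}\bigr)$, reduces to showing $\mc{C}\otimes\mb{Z}_{(\ell)}\simeq0$ for every $\ell\neq p$ (a $\mb{Z}[1/p]$-linear object vanishes if all these localizations do), and then uses compact generators $\mc{G}$, together with compactness of $p_i^*\mc{G}$, to commute $-\otimes\mb{Z}_{(\ell)}$ past the limit, so that Kelly's theorem can be applied to $\mc{F}\otimes\mb{Z}_{(\ell)}$, which \emph{is} $\mb{Z}_{(\ell)}$-linear. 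Your proposal has no analogue of this localization-and-compactness step, and without it the reduction to Kelly's theorem does not go through. (Your coefficient-reduction paragraph from general $\Lambda$ with $p^{-1}\in\Lambda$ to $\mb{Z}[1/p]$ is harmless but does not address this.)

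Your alternative sketch is closer in spirit to why one expects the $p$dh-topology to work, but as written it also overstates what is available: cdh-descent for $\mc{D}$ is unconditional (Cisinski, Cisinski--D\'eglise), not Kelly's theorem; the transfer $f_*f^*\rightarrow\mr{id}$ with composite equal to multiplication by the degree is not free for an arbitrary finite surjective flat morphism over a possibly singular base (such $f$ need not be lci, so the fundamental-class formalism does not apply directly, and setting up such transfers is nontrivial input of the Kelly/Cisinski--D\'eglise machinery); and checking \v{C}ech descent for the two generating families only yields descent for \v{C}ech nerves, whereas the lemma asserts the equivalence for an arbitrary $p$dh-\emph{hypercovering}, so an additional hypercompleteness/convergence argument would be needed. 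The paper's proof avoids all of these issues by quoting the $\ell'$dh hyperdescent statement for $\mb{Z}_{(\ell)}$-linear coefficients and performing the localization argument described above.
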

\begin{proof}
 Let $\mc{C}:=\mr{cofib}\bigl(\mc{F}\rightarrow\invlim_{i\in\mbf{\Delta}}p_{i*}p_i^*\mc{F}\bigr)$.
 We wish to show that $\mc{C}\simeq0$, and for this, it suffices to show that $\mc{C}\otimes_{\mb{Z}[1/p]}\mb{Z}_{(\ell)}\simeq0$
 for any prime $\ell\neq p$ (\emph{cf.}~\cite[proof of Proposition~3.13]{CD}).
 To show this, we must show that for any {\em compact} object $\mc{G}\in\mc{D}_S$, we have $\mr{Hom}(\mc{G},\mc{C}\otimes\mb{Z}_{(\ell)})\simeq0$.
 We have
 \begin{align*}
 \mr{Hom}(\mc{G},\mc{C}\otimes\mb{Z}_{(\ell)})
 \simeq
 \mr{cofib}\bigl[\mr{Hom}(\mc{G},\mc{F}\otimes\mb{Z}_{(\ell)})\rightarrow
 \mr{Hom}(\mc{G},(\invlim_{i\in\mbf{\Delta}}p_{i*}p_i^*\mc{F})\otimes\mb{Z}_{(\ell)})\bigr].
 \end{align*}
 We may further compute as
 \begin{align*}
 \mr{Hom}\bigl(\mc{G},(\invlim_{i\in\mbf{\Delta}}p_{i*}p_i^*\mc{F})\otimes\mb{Z}_{(\ell)}\bigr)
 &\simeq
 \mr{Hom}\left(\mc{G},\invlim_{i\in\mbf{\Delta}}p_{i*}p_i^*\mc{F}\right)\otimes\mb{Z}_{(\ell)}\\
 &\simeq
 \left(\indlim_{i\in\mbf{\Delta}}(\mr{Hom}(\mc{G},p_{i*}p_i^*\mc{F})\right)\otimes\mb{Z}_{(\ell)}\\
 &\simeq
 \indlim_{i\in\mbf{\Delta}}\mr{Hom}\left(p_i^*\mc{G},p_i^*\mc{F}\right)\otimes\mb{Z}_{(\ell)}\\
 &\simeq
 \indlim_{i\in\mbf{\Delta}}\mr{Hom}\left(p_i^*\mc{G},p_i^*\left(\mc{F}\otimes\mb{Z}_{(\ell)}\right)\right)\\
& \simeq
 \mr{Hom}\left(\mc{G},\invlim_{i\in\mbf{\Delta}}p_{i*}p_i^*\left(\mc{F}\otimes\mb{Z}_{(\ell)}\right)\right),
 \end{align*}
 where the $1^\text{st}$ and $4^\text{th}$ equivalences follow from the compactness of $\mc{G}$ and $p_i^*\mc{G}$ respectively.
 By \cite[Theorem~5.10]{CD}, $\mc{F}\otimes\mb{Z}_{(\ell)}$ admits $\ell'$dh-descent, in particular, $p$dh-descent.
 Thus, combining with the computations above, we have $\mc{C}\otimes\mb{Z}_{(\ell)}\simeq0$ as desired.
\end{proof}

Now, let $\mc{G}\in\mc{D}_S$. Then we have
\begin{align*}
 \Hom(\mc{G},\mc{F})\xrightarrow{\ \sim\ }
 \Hom\bigl(\mc{G},\invlim_{i\in\mbf{\Delta}}p_{i*}p_i^*\mc{F}\bigr)
 \cong
 \invlim_{i\in\mbf{\Delta}}\Hom\bigl(\mc{G},p_{i*}p_i^*\mc{F}\bigr)
 \cong
 \invlim_{i\in\mbf{\Delta}}\Hom\bigl(p_i^*\mc{G},p_i^*\mc{F}\bigr).
\end{align*}
We write $\mr{Hom}^k:=\pi_{-k}\mr{Hom}$.
Assume that $\Hom^k\bigl(p_i^*\mc{G}_i,p_i^*\mc{F}_i\bigr)\cong0$ for any $i\in\mbf{\Delta}$.
Then the complex of $\mbf{\Delta}$-indexed diagrams
$\bigl\{\Hom\bigl(p_i^*\mc{G}_i,p_i^*\mc{F}_i\bigr)\bigr\}_{i\in\mbf{\Delta}}$ belongs to $D^+(\mr{Ab}^{\mbf{\Delta}})$, and induces a spectral sequence
\begin{equation}
 \label{desSS}
  E_2^{p,q}=\mr{R}^p\invlim_{i\in\mbf{\Delta}}
  \mr{Hom}^q(p_i^*\mc{G},p_i^*\mc{F})\Longrightarrow
  \mr{Hom}^{p+q}(\mc{G},\mc{F}).
\end{equation}

\section{Main result and vanishing of higher homotopy}
\label{sec2}

\subsection{}
\label{FMbivdfn}
Let us recall the definition of bivariant theory after Fulton and MacPherson very briefly.

\begin{dfn*}
 A {\em bivariant theory $T$ over $k$} is an assignment to each morphism $f\colon X\rightarrow Y$
 in $\mr{Sch}_{/k}$ a $\mb{Z}$-graded Abelian group $T(f)$ equipped with three operations:
 \begin{enumerate}
  \item\label{def-1}(Product) For composable morphisms $f\colon X\rightarrow Y$ and $g\colon Y\rightarrow Z$,
       we have a homomorphism of graded groups $\bullet\colon T(f)\otimes T(g)\rightarrow T(g\circ f)$.
       
  \item (Pushforward) Assume we are given composable morphisms $f$ and $g$ as in~\eqref{def-1}. If, furthermore, $f$ is proper, we have the homomorphism $f_*\colon T(g\circ f) \rightarrow T(g)$.
	
  \item (Pullback) Consider the following Cartesian diagram:
	\begin{equation}
	 \label{stdcartdia}
	  \begin{gathered}\xymatrix{
	  X'\ar[r]^-{g'}\ar[d]_{f'}\ar@{}[rd]|\square&
	  X\ar[d]^{f}\\
	 Y'\ar[r]^-{g}&Y.
	  }\end{gathered}
	\end{equation}
	Then we have the homomorphism $g^*\colon T(f)\rightarrow T(f')$.
 \end{enumerate}
 These operations are subject to (more or less straightforward) compatibility conditions.
 Among these compatibility conditions, let us recall the projection formula for the later use.
 We consider the diagram \eqref{stdcartdia} such that $g$ is proper, and a morphism $h\colon Y\rightarrow Z$.
 Assume we are given $\alpha\in T(f)$ and $\beta\in T(h\circ g)$.
 Then we have $\alpha\bullet g_*(\beta)=g'_*(g^*\alpha\bullet\beta)$.

 Given bivariant theories $T$, $T'$, a morphism of theories $T\rightarrow T'$ is a collection of homomorphisms
 $T(f)\rightarrow T'(f)$ for any morphism $f$ in $\mr{Sch}_{/k}$ compatible with the operations above.
 We refer to \cite[\S2.2]{FM} for details.\footnote{
 In our situation, ``confined maps'' are ``proper morphisms'' and any
 Cartesian squares are ``independent squares''.}
\end{dfn*}

\begin{dfn}
 Let $T$ be a bivariant theory over $k$.
 An {\em $\mb{A}^1$-orientation of $T$} is an element $\eta\in T^1(\mb{A}^1\rightarrow\mr{Spec}(k))$, where $T^1$ is the degree $1$ part.
 Let $T'$ be another bivariant theory endowed with an $\mb{A}^1$-orientation $\eta'$.
 A morphism of bivariant theories $F\colon T\rightarrow T'$ is said to be {\em compatible with the orientation}
 if $F(\mb{A}^1\rightarrow\mr{Spec}(k))(\eta)=\eta'$.
\end{dfn}

\begin{rem*}
 Fulton and MacPherson called an {\em orientation} a rule to assign an element of $T(f)$ to each $f$ in a compatible manner.
 Since our $\mb{A}^1$-orientation can be regarded as a part of this data, we named it after Fulton and MacPherson's.
 This has {\it a priori} nothing to do with orientation of motivic spectra.
\end{rem*}

\subsection{}
Our Borel-Moore homology $\rHbm_a(X/S,\Lambda(b))$ defines a bivariant theory (in an extended sense because it is {\em bigraded}), \emph{cf.}~\cite[\S1.2.8]{Deg}.
By associating the graded group $\bigoplus_k\rHbm_{2k}(X/S,\Lambda(k))$ to $X\rightarrow S$,
we define the bivariant theory denoted by $\rHbm_{2*}(X/S,\Lambda(*))$.
This bivariant theory has a canonical orientation as follows.
Let $q\colon\mb{A}^1_S\rightarrow S$ be the projection.
Then we have a morphism
\begin{equation*}
 q_!q^*\Lambda_S(1)[2]
  \cong
  q_!q^!\Lambda_S
  \xrightarrow{\ \mr{adj}\ }
  \Lambda_S,
\end{equation*}
where the isomorphism is defined using \cite[Theorem~2.4.50.3]{CDbook} and the canonical identification of $\mr{MTh}_{\mb{A}^1}(T_q)$ with $\Lambda_{\mb{A}^1}(1)[2]$,
where $\mr{MTh}$ is the motivic Thom spectrum defined in \cite[Definition~2.4.12]{CDbook}.
The class of the above morphism in $\rHbm_2(X/S,\Lambda(1))\cong\pi_0\mr{Hom}_{D(S)}(q_!q^*\Lambda_S(1)[2],\Lambda_S)$
is the canonical orientation of $\rHbm_{2*}(X/S,\Lambda(*))$.

\subsection{}
Let us introduce another main player of this paper, $z(-,-)$, from \cite{SV}.
Let $f\colon X\rightarrow S$ be a morphism, and $d\geq0$ be an integer.
Recall that Suslin and Voevodsky\footnote{In fact, Suslin and Voevodsky used the
notation $z(X/S,d)$ as a presheaf on $\mr{Sch}_{/S}$. Our $z(X/S,d)$ is
the global sections of it.}
introduced Abelian groups $z_{\mr{equi}}(f,d)$ and $z(f,d)$, or $z_{\mr{equi}}(X/S,d)$ and $z(X/S,d)$ if no confusion may arise.
We do not recall the precise definition of these groups, but content ourselves with giving ideas of how these groups are defined.
Both groups are certain subgroups of the free Abelian group $Z(X)$ generated by integral subscheme of $X$.
If we are given an element $w\in Z(X)$ we may consider the ``support'' denoted by $\mr{Supp}(w)$ in an obvious manner.
Naively thinking, we wish to define $z(X/S,d)$ as a subgroup of $Z(X)$ consisting of $w$ such that $\mr{Supp}(w)\rightarrow S$
is equidimensional of dimension $d$ {\em over generic points of $S$}.
However, if we defined $z(X/S,d)$ in this way, the association $z(X_T/T,d)$ to $T$ would not be functorial.
In order to achieve this functoriality, Suslin and Voevodsky introduces an ingenious compatibility conditions.
We do not recall these compatibility conditions, but here is an illuminating example:
Let $Z\subset X$ be a closed immersion such that the morphism $Z\rightarrow S$ is flat.
Then the associated cycle $[Z]$, called a {\em flat cycle}, belongs to $z(X/S,d)$. 
%
%
%
Now, the group $z_{\mr{equi}}(X/S,d)$ is a subgroup of $z(X/S,d)$.
The element $w$ belongs to $z_{\mr{equi}}(X/S,d)$ if and only if the morphism $\mr{Supp}(w)\rightarrow S$ is equidimensional (of relative dimension $d$).
By the compatibility conditions we mentioned above, if we are given a morphism $S'\rightarrow S$,
we have the pullback homomorphism $z_{(\mr{equi})}(X/S,d)\rightarrow z_{(\mr{equi})}(X\times_S S'/S',d)$.
This enables us to define presheaves $\ul{z}_{(\mr{equi})}(X/S,d)$ on $\mr{Sch}_{/S}$.
Then $\ul{z}(X/S,d)$ is a cdh-sheaf, and the cdh-sheafification of $\ul{z}_{\mr{equi}}(X/S,d)$ coincides with $\ul{z}(X/S,d)$.
Furthermore, flat cycles generate $\ul{z}(X/S,d)$ cdh-locally, and can be thought of as a building pieces (\emph{cf.}~\cite[Theorem 4.2.11]{SV}).
The following theorem compactly summarizes some aspects of \cite{SV}.

\begin{thm*}[\emph{cf.} \protect{\cite{SV}}]
 The assignments $z(f,*):=\bigoplus_k z(f,k)$ and $z_{\mr{equi}}(f,*):=\bigoplus_k z_{\mr{equi}}(f,k)$ to a morphism $f$
 can be promoted to a bivariant theories with $\mb{A}^1$-orientation.
\end{thm*}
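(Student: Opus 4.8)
The plan is to equip the assignments $z(f,\ast)$ and $z_{\mr{equi}}(f,\ast)$ with the three bivariant operations and then the $\mb{A}^1$-orientation. For the operations we exploit the presheaf structure recalled above. \emph{Pullback} is essentially given: if $g\colon Y'\rightarrow Y$ and $f\colon X\rightarrow Y$ fit in the Cartesian square \eqref{stdcartdia}, then the base-change map $z_{(\mr{equi})}(X/Y,d)\rightarrow z_{(\mr{equi})}(X\times_Y Y'/Y',d)$ of Suslin--Voevodsky gives the desired $g^*\colon T(f)\rightarrow T(f')$ (degreewise, then summed over $d$). \emph{Product}: given $f\colon X\rightarrow Y$, $g\colon Y\rightarrow Z$, one sends $\alpha\in z_{\mr{equi}}(X/Y,d)$ and $\beta\in z_{\mr{equi}}(Y/Z,e)$ to a cycle in $z_{\mr{equi}}(X/Z,d+e)$; the construction is the relative composition (``correspondence composition'') of cycles from \cite[\S3.7]{SV} --- intersect the pullback of $\alpha$ along $X\times_Z Y\rightarrow X$ style diagrams and push forward. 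One checks this lands in $z_{\mr{equi}}$ (equidimensionality is additive in towers) and, using that the cdh-sheafification of $\ul{z}_{\mr{equi}}$ is $\ul{z}$, that it extends to $z(f,\ast)$. \emph{Pushforward}: for $f$ proper, proper pushforward of cycles $f_\ast\colon Z(X)\rightarrow Z(Y)$ preserves the Suslin--Voevodsky conditions (properness is exactly what makes $\mr{Supp}$ behave), giving $f_\ast\colon z_{(\mr{equi})}(X/Z,d)\rightarrow z_{(\mr{equi})}(Y/Z,d)$.

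Next I would verify the compatibility axioms of \cite[\S2.2]{FM}: associativity of the product, the two commutativities between product and pullback/pushforward, and most importantly the projection formula $\alpha\bullet g_\ast(\beta)=g'_\ast(g^\ast\alpha\bullet\beta)$. Each of these reduces to a statement about cycles over a base, and in each case it suffices by cdh-descent (flat cycles generate $\ul{z}$ cdh-locally, \emph{cf.}~\cite[Theorem~4.2.11]{SV}) to check the identity on flat cycles $[W]$ with $W\rightarrow S$ flat, where the relative-cycle operations become ordinary flat pullback, proper pushforward, and intersection of flat families --- so the axioms collapse to classical cycle-theoretic identities (flat base change, the projection formula for proper pushforward along flat maps, associativity of intersection with a flat family). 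One must also confirm that the confined maps are the proper morphisms and all Cartesian squares are independent, as noted in the footnote to the definition, so that no extra transversality hypotheses intervene.

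Finally, the $\mb{A}^1$-orientation: the element $\eta\in T^1(\mb{A}^1\rightarrow\mr{Spec}(k))$ is the class of the fundamental cycle $[\mb{A}^1_k]\in z_{\mr{equi}}(\mb{A}^1/k,1)$, which lies in degree $1$ of $z_{\mr{equi}}(\mb{A}^1\rightarrow\mr{Spec}(k),\ast)$ and maps to the corresponding flat cycle in $z(\mb{A}^1\rightarrow\mr{Spec}(k),\ast)$; the inclusion $z_{\mr{equi}}\hookrightarrow z$ is then a morphism of bivariant theories compatible with the orientation, once all the operations above are checked to be compatible with that inclusion.

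I expect the main obstacle to be the \emph{product} operation together with the associativity and projection-formula axioms: defining the relative composition of cycles so that it genuinely lands in $z_{\mr{equi}}$ (and extends to $z$) requires the full force of the fibrewise-intersection machinery of \cite[\S3]{SV}, and verifying the FM compatibilities demands a careful cdh-local reduction to flat cycles rather than a direct computation. The pullback and pushforward operations, by contrast, are essentially formal consequences of the presheaf structure and classical properties of proper pushforward of cycles.
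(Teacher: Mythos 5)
Your constructions are the same as the paper's: the pullback is the base-change homomorphism defined right after \cite[Lemma~3.3.9]{SV}, the pushforward is \cite[Corollary~3.6.3]{SV}, the product is the correspondence homomorphism $\mr{Cor}$ of \cite[Corollary~3.7.5]{SV}, and the $\mb{A}^1$-orientation is $\eta:=[\mb{A}^1]$, exactly as in the paper. Where you diverge is in the treatment of the Fulton--MacPherson compatibilities: the paper simply records that associativity, compatibility of product and pushforward with pullback, and the projection formula are all proven in \cite{SV}, whereas you propose to re-verify them by a cdh-local reduction to flat cycles. That reduction is both unnecessary and, as stated, flawed: first, the generation of $\ul{z}$ by flat cycles cdh-locally (\cite[Theorem~4.2.11]{SV}) is only available after inverting the characteristic, while the theorem concerns the Suslin--Voevodsky groups themselves, so the identities would not be established integrally by this route; second, to descend an identity along a cdh-covering you must already know that the operations commute with base change, which is itself one of the axioms you are trying to check, so the argument is circular unless you first import exactly the compatibilities from \cite{SV} that you set out to reprove. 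Likewise, there is no need to extend the product from $z_{\mr{equi}}$ to $z$ by sheafification: \cite[Corollary~3.7.5]{SV} defines $\mr{Cor}$ on both variants directly. In short, the definitions and the orientation match the paper, but the verification step should be a citation of \cite{SV} rather than a cdh-local argument.
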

\begin{proof}
 Given any morphism $\alpha\colon T\rightarrow S$, the pullback homomorphism
 \[\alpha\colon z_{(\mr{equi})}(X/S,d)\longrightarrow z_{(\mr{equi})}(X_T/T,d)\]
 is then defined in \cite[right after Lemma 3.3.9]{SV}.
 Given a proper morphism $X\rightarrow Y$, the pushforward homomorphism
\[\beta_*\colon z_{(\mr{equi})}(X/S,d)\longrightarrow z_{(\mr{equi})}(Y/S,d)\] is defined in \cite[Corollary 3.6.3]{SV}.
Given a sequence of morphisms $X\xrightarrow{f}Y\xrightarrow{g}Z$ and integers $d,e\geq0$,
 the homomorphism
 \[\mr{Cor}\colon z_{(\mr{equi})}(X/Y,d)\times z_{(\mr{equi})}(Y/Z,e)\longrightarrow z_{(\mr{equi})}(X/Z,d+e)\]
 is defined in \cite[Corollary 3.7.5]{SV}.
 We may endow with $\mb{A}^1$-orientation by taking $\eta:=[\mb{A}^1]$.
 The compatibility conditions for these operations have also been proven in \cite{SV}.
\end{proof}

\subsection{}
\label{mainres}
Our main theorem is as follows.

\begin{thm*}
 Recall that the base field $k$ is a perfect field of characteristic $p>0$, and let $\Lambda:=\mb{Z}[1/p]$.
 Then, there exists a {\em unique} map of bivariant theories compatible with $\mb{A}^1$-orientation:
 \begin{equation*}
  \tau\colon z_{\mr{equi}}(-,*)\longrightarrow\rHbm_{2*}(-,\Lambda(*)).
 \end{equation*}
\end{thm*}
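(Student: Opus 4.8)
The plan is to construct $\tau$ first on flat cycles, then extend by cdh-descent, and finally prove uniqueness using the vanishing of higher homotopy that has just been established in \S\ref{sec2}. Let me describe the steps in order.

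\emph{Step 1: construction over a smooth base.} Suppose first that $S$ is $k$-smooth, and let $w\in z_{\mr{equi}}(X/S,n)$ with support $Z=\mr{Supp}(w)\subset X$, equidimensional of relative dimension $n$ over $S$. I want to produce an element of $\rHbm_{2n}(X/S,\Lambda(n))=\pi_0\mr{Hom}_{\mc{D}_S}(\Hc(X/S,\Lambda(n)),\Lambda_S)$. It suffices to treat a prime cycle $Z$ (with multiplicity $1$), by additivity. Pushing forward along the proper map $Z\hookrightarrow X$ reduces us to constructing a trace map for $Z\rightarrow S$ itself; since $Z$ may be singular, I pass to an open dense $Z^{\circ}\subset Z$ which is smooth over $k$, as in the introduction the vanishing \eqref{vanhighhom} (or rather its motivic analogue promised at the end of \S\ref{sec2}) tells us that constructing the trace for $Z\rightarrow S$ is equivalent to constructing it for $Z^{\circ}\rightarrow S$. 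With both $Z^{\circ}$ and $S$ smooth over $k$, the structural morphisms $p_{Z^{\circ}},p_S$ are smooth, and relative Poincar\'e duality gives $p_{S}^*\left<d_S\right>\cong p_S^!$ and similarly for $Z^{\circ}$; composing these isomorphisms with the counit $p_{S!}p_S^!\to\mathrm{id}$ and the functoriality of $(-)_!$ produces the canonical map $\Hc(Z^{\circ}/S,\Lambda(n))\to\Lambda_S$, hence the desired class. One checks this is compatible with restriction to opens and with sums, so it descends to a well-defined $\tau_S\colon z_{\mr{equi}}(X/S,*)\to\rHbm_{2*}(X/S,\Lambda(*))$ for $S$ smooth.

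\emph{Step 2: general base by $p$dh-descent.} For arbitrary $S\in\mr{Sch}_{/k}$ choose, using the Lemma of \S\ref{GTres}, a $p$dh-hypercovering $S_\bullet\to S$ with each $S_i$ smooth over $k$; write $X_i:=X\times_S S_i$. Since $\Lambda=\mb{Z}[1/p]$, the Lemma of \S\ref{pdfdes} gives that $\Lambda_S$ satisfies $p$dh-descent, so $\rHbm(X/S,\Lambda(n))\simeq\invlim_{i\in\mbf\Delta}\rHbm(X_i/S_i,\Lambda(n))$, and the motivic analogue of \eqref{vanhighhom} ensures $\mr{Hom}^k\bigl(\Hc(X_i/S_i,\Lambda(n)),\Lambda_{S_i}\bigr)=0$ for $k<0$, so the descent spectral sequence \eqref{desSS} degenerates enough that $\rHbm_{2n}(X/S,\Lambda(n))=\varprojlim_{i\in\mbf\Delta}\rHbm_{2n}(X_i/S_i,\Lambda(n))$ (a genuine limit of abelian groups, with no $\varprojlim^1$ in the relevant degree). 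On the cycle side, the pullback maps $z_{\mr{equi}}(X/S,n)\to z_{\mr{equi}}(X_i/S_i,n)$ assemble into a compatible system; composing with the $\tau_{S_i}$ from Step 1 and appealing to the functoriality of those maps with respect to base change between smooth schemes (which again follows from the compatibilities of Poincar\'e duality) yields an element of the limit, i.e.\ $\tau_S(w)\in\rHbm_{2n}(X/S,\Lambda(n))$. Independence of the chosen hypercovering follows by comparing two covers via a common refinement. This defines $\tau$ on objects; its compatibility with the three bivariant operations (base change, proper pushforward, product) and with the $\mb{A}^1$-orientation — so that $\tau([\mb{A}^1])=\eta$ is the canonical orientation of \S2.3 — is checked by reducing each identity, via the same descent, to the smooth case, where it is the standard manipulation of the six functors and the fundamental class of \cite{Deg}.

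\emph{Step 3: uniqueness.} Given any other map of bivariant theories $\tau'$ compatible with the $\mb{A}^1$-orientation, I must show $\tau=\tau'$. By $p$dh-descent and the vanishing of negative homotopy just as in Step 2, it is enough to prove $\tau_S=\tau'_S$ when $S$ is smooth; and by additivity and proper pushforward along $\mr{Supp}(w)\hookrightarrow X$ it is enough to check agreement on the fundamental class $[Z]\in z_{\mr{equi}}(Z/S,n)$ of a smooth $Z$ equidimensional over smooth $S$. Any such $Z\to S$ is, cdh-locally (indeed Zariski-locally after shrinking), obtained from $\mb{A}^n$ by \'etale maps and the operations the bivariant structure must respect, so compatibility with the product and with $\eta=[\mb{A}^1]$ forces $\tau'_S([Z])$ to equal the class built from $\eta^{\bullet n}$ and the \'etale-localization data — which is exactly $\tau_S([Z])$.

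\emph{Main obstacle.} The technical heart is Step 2: one must verify that the descent datum is genuinely \emph{coherent} — not just that each face/degeneracy square commutes up to a chosen homotopy, but that these homotopies are themselves compatible — so that the element of $\varprojlim$ is canonically defined. In the $1$-categorical (model-categorical) formulation this is exactly the content of the vanishing of $\mr{Hom}^{k}$ for $k<0$ making the relevant $\varprojlim^{1}$ and higher obstruction groups vanish; the honest verification that the smooth-base trace maps $\tau_{S_i}$ commute with all base-change maps in the hypercovering — i.e.\ that Poincar\'e duality and the fundamental class are suitably natural — is where the real work lies, and is the step I would expect to occupy the bulk of \S\ref{sec3}--\S\ref{pfmthm}.
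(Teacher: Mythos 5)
Your overall architecture --- construct the trace over a smooth base from fundamental classes/Poincar\'e duality, globalize by $p$dh-descent using the vanishing of higher homotopy, and prove uniqueness by reducing to the smooth case and the $\mb{A}^1$-orientation --- is indeed the strategy of the paper, but the two places where you write ``one checks'' are exactly where the content lies, and as written they do not go through. First, in Step 2 you invoke the identification $\rHbm_{2n}(X/S,\Lambda(n))\cong\invlim_{i\in\mbf{\Delta}}\rHbm_{2n}(X_i/S_i,\Lambda(n))$ and the vanishing of $\mr{Hom}^k$ for $k<0$ for $X/S$ with the twist $n$ equal to the dimension of the cycle. But the vanishing (Proposition \ref{vanishprop}) requires $\dim(f)\leq n$, which fails for a general $X$: for a relative $0$-cycle on a family of surfaces, $\rHbm_{j}(X_i/S_i,\Lambda(0))$ is typically nonzero for $j>0$ (already over a point it contains higher Chow groups such as $\mr{CH}^2(X_i,1;\Lambda)$), so the descent spectral sequence does not give the clean limit. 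The paper first replaces $X$ by the support $W$ of $w$ --- which satisfies $\dim(W/S)\leq n$ precisely because $w\in z_{\mr{equi}}$ --- performs descent for $W/S$, and only then pushes forward to $X$; your Step 2 omits this, so the element of the limit you produce has no canonical lift to $\rHbm_{2n}(X/S,\Lambda(n))$.

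Second, and more seriously: to obtain a compatible system over the hypercovering (and independence of the hypercovering, and the base-change axiom in general) you need, for every structure map $g\colon S_i\to S_j$ between the smooth pieces, the identity $g^*\tau_{S_j}(w_j)=\tau_{S_i}(g^*w_j)$, where $g^*w_j$ is the Suslin--Voevodsky pullback of an \emph{arbitrary}, in general non-flat, equidimensional cycle. You assert this ``follows from the compatibilities of Poincar\'e duality'', but it is not formal: the base-change compatibility actually proved in the paper (Lemma \ref{bccommsim}-\eqref{bccommsim-2}) assumes $f$ \emph{flat}, and flatness is used essentially (transversality in D\'eglise's sense, and $g^!([X])=[X']$); for non-flat cycles the pullback acquires multiplicities and the needed statement is a refined specialization/Gysin compatibility that cannot be salvaged by generic flatness, since $g$ may land in the non-flat locus. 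The paper circumvents this by never defining the trace on arbitrary equidimensional cycles over a smooth base: it defines it only on flat closed subschemes, i.e.\ on $\Lambda\mr{Hilb}(X/S,r)/I_{X/S}$, and then uses \cite[Theorem~4.2.11]{SV} (the cdh-sheafification of the flat-cycle presheaf is $\ul{z}(X/S,r)$, an h-sheaf) to write $z(X/S,d)$ as a colimit over good $p$dh-hypercoverings of limits of flat-cycle groups, gluing the trace from the flat case; this is also why cdh-blowups (flattening) are built into the topology. Your proposal makes no use of this input from \cite{SV}, and without it the ``compatible system'' of Step 2 is unjustified. A smaller issue of the same kind: in Step 3, a smooth $Z$ equidimensional over smooth $S$ need not be, even generically, \'etale over $\mb{A}^n_S$ in characteristic $p$ (inseparability, e.g.\ $x\mapsto x^p$); the paper instead reduces the relative case to the absolute case over the perfect field $k$ via the product with the fundamental class of $S$, and pins down the finite \'etale case by pushforward together with torsion-freeness of $\Lambda$.
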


A proof of this theorem is given at the end of Section~\ref{pfmthm}.
Let us introduce a notation.
Let $f\colon X\rightarrow S$ be a flat morphism of relative dimension $d$.
Then $[X]$ is an element of $z_{\mr{equi}}(f,d)$.
If we are given $\tau$ as above, we have $\tau([X])\in\rHbm_{2d}(f,\Lambda(d))$.
This element is denoted by $\mr{Tr}^{\tau}_f$.

\begin{rem}\label{mainrem}
\leavevmode
 \begin{enumerate}
  \item Our theorem produces trace maps only for motivic Eilenberg-MacLane spectrum, and the reader might think that our theorem is too restrictive.
	However, this is not the case since the motivic Eilenberg-MacLane spectrum is universal among
	``absolute $\SH$-spectrum $\mb{E}$ with orientation which is $\Lambda$-linear and whose associated formal group law is additive'' by \cite[Remark 2.2.15]{Deg}.
	More precisely, if we are given such an absolute $\SH$-spectrum $\mb{E}$,
	we have a unique map $\phi\colon\mb{H}\Lambda\rightarrow\mb{E}$.
	Associated to this map, we may consider the composition
	$z_{\mr{equi}}(-,\Lambda(*))\xrightarrow{\tau}\rHbm_{2*}(-,\Lambda(*))\xrightarrow{\phi}\rHbm_{2*}(-,\mb{E}(*))$,
	where the last object is the Borel-Moore homology associated with $\mb{E}$,
	and we get trace maps for $\mb{E}$.

  \item Choose $\mb{E}$ to be the $\ell$-adic \'{e}tale absolute spectrum $\mb{H}_{\mr{\acute{e}t}}\mb{Q}_\ell$ for $\ell\neq p$.
	By construction above, we have $z_{\mr{equi}}(X/S,d)\rightarrow\rHbm_{\mr{\acute{e}t},2d}(X/S,d)$,
	where $\rHbm_{\mr{\acute{e}t},*}(X/S,*)$ is the $\ell$-adic Borel-Moore homology.
	If $f$ is a flat morphism of dimension $d$,
	the image of $[X]\in z_{\mr{equi}}(X/S,d)$ by this morphism is denoted by $\mr{Tr}^{\mr{\acute{e}t}}_f$.
	This element of $\rHbm_{\mr{\acute{e}t},2d}(X/S,d)$, considered as a morphism $\Hc(X/S,\Lambda_S(d)[2d])\rightarrow\Lambda_S$,
	coincides with the trace map defined in \cite[Expos\'e~XVIII, Th\'or\`eme~2.9]{SGA}.
	Thus, the morphism $\tau$ can be seen as a generalization of the trace map of \emph{loc.~cit.}, at least when the base field is perfect.
	
  \item When $X\rightarrow S$ is a g.c.i.~morphism, D\'{e}glise defined a similar map in \cite[Theorem~1]{Deg}.
	In fact, our map can be considered as a generalization of \cite{Deg} (even though we only consider over a field),
	or rather, is built upon D\'{e}glise's map.

  \item The theorem also holds in the case where $p=0$ and $\Lambda=\mb{Z}$.
	Furthermore, in the case where $p>0$ and if we assume the existence of the resolution of singularities,
	we may, in fact, take $\Lambda=\mb{Z}$ in the theorem.
	The proof works with obvious changes, and the detail is left to the reader.

  \item\label{mainrem-4}
       The theorem, in fact, holds for any field $k$, not necessarily perfect.
       In fact, let $l:=k^{\mr{perf}}$ be the perfection.
       The compact support cohomology $\Hc(X/S)$ is compatible with arbitrary base change.
       Thus, by \cite[Corollary 2.1.5]{EK}, or alternatively \cite[Proposition~8.1]{CD}, the pullback homomorphism
       $\rHbm_p(X/S,\Lambda(q))\rightarrow\rHbm_p(X_l/S_l,\Lambda(q))$ is an isomorphism since $p^{-1}\in\Lambda$.
       Using this isomorphism, the trace map for $\rHbm_{2*}(X_l/S_l,\Lambda(*))$, constructed above,
       induces the trace map for $\rHbm_{2*}(X/S,\Lambda(*))$ as well.
 \end{enumerate}
\end{rem}

\subsection{}
Before going to the next section, let us show the most important property to construct the trace map,
namely the vanishing of suitable higher homotopies.
For a morphism $f\colon X\rightarrow S$, we put $\dim(f):=\max\bigl\{\dim(f^{-1}(s))\mid s\in S\bigr\}$.

\begin{prop*}
 \label{vanishprop}
 For a morphism $f\colon X\rightarrow S$ in $\mr{Sch}_{/k}$ and an integer $d$ such that $\dim(f)\leq d$, we have
 \begin{equation*}
  \rHbm_{2m+n}(X/S,\Lambda(m))=0
 \end{equation*}
 in one of the following cases:
\begin{enumerate}
\item for any $m>d$ and any $n$,
\item when $m=d$ and for any $n>0$.
\end{enumerate}
\end{prop*}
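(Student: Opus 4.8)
The plan is to reduce the vanishing statement to the case of a smooth base by $p$dh-descent, and then in the smooth case reduce further to $S = \mr{Spec}(k)$ via the relative Poincaré duality isomorphism $p_S^*\langle d_S\rangle \cong p_S^!$, where at the end we invoke the known computation of motivic Borel--Moore homology of $k$-schemes.

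First I would set up the descent step. Given $f\colon X\rightarrow S$ in $\mr{Sch}_{/k}$, apply the Lemma of \S\ref{GTres} (in its hypercovering form) to obtain a $p$dh-hypercovering $p_\bullet\colon S_\bullet\rightarrow S$ with each $S_i$ a smooth $k$-scheme; set $X_i := X\times_S S_i$ and let $f_i\colon X_i\rightarrow S_i$ be the base change, noting $\dim(f_i)\leq\dim(f)\leq d$. Since $\Lambda = \mb{Z}[1/p]$ satisfies $p^{-1}\in\Lambda$, the $p$dh-descent Lemma of \S\ref{pdfdes} applies: writing $\mc{G} := \Hc(X/S,\Lambda_S\langle m\rangle)$ and using proper base change $p_i^*\Hc(X/S,-)\cong\Hc(X_i/S_i,p_i^*(-))$ together with compactness of $\mc{G}$ and of $p_i^*\mc{G}$, we get the spectral sequence \eqref{desSS}
\[
 E_2^{a,b} = \mr{R}^a\!\!\invlim_{i\in\mbf{\Delta}}\rHbm_{-b}\!\bigl(X_i/S_i,\Lambda(m)\bigr)\Longrightarrow \rHbm_{-a-b}(X/S,\Lambda(m)),
\]
(after reindexing $\mr{Hom}^{\bullet}$ to $\rHbm_{\bullet}$). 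So it suffices to prove, for $S$ smooth over $k$, the vanishing $\rHbm_{2m+n}(X_i/S_i,\Lambda(m)) = 0$ in the two stated ranges — the hypothesis $\dim(f_i)\leq d$ being preserved.

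Next, the smooth-base case. When $S$ is smooth over $k$ of dimension $d_S$, the relative Poincaré duality isomorphism $p_S^*\langle d_S\rangle\cong p_S^!$ (the isomorphism $f^*(d)[2d]\cong f^!$ for smooth $f$ recalled in \S\ref{fixcoh}) gives, for $g := p_S\circ f = p_X\colon X\rightarrow\mr{Spec}(k)$,
\[
 \rHbm_m(X/S,\Lambda(n)) \cong \rHbm_{m+2d_S}\!\bigl(X/k,\Lambda(n+d_S)\bigr),
\]
by adjunction $\Hom_{\mc{D}_S}(g'_!g'^*\Lambda_S\langle n\rangle,\Lambda_S)\cong\Hom_{\mc{D}_k}(p_{X!}p_X^*\Lambda_k\langle n+d_S\rangle,\Lambda_k)$ (using $p_S^!\Lambda_k\cong\Lambda_S\langle d_S\rangle$, proper base change / compatibility of $!$-functors, and $f_!\cong$ the absolute $f_!$). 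Writing $d_X := \dim(X)$, note $d_X\leq d + d_S$ since fibre dimensions add along $X\rightarrow S\rightarrow\mr{Spec}(k)$ with $S$ irreducible of dimension $d_S$ (handling components of $S$ separately). Thus the statement reduces to: for $X\in\mr{Sch}_{/k}$ of dimension $\leq D$, one has $\rHbm_{2\mu+\nu}(X/k,\Lambda(\mu)) = 0$ for $\mu > D$ and any $\nu$, and for $\mu = D$ and any $\nu > 0$. This is the classical vanishing range for motivic Borel--Moore homology / higher Chow groups over a field: $\rHbm_{2\mu+\nu}(X/k,\Lambda(\mu))$ identifies with Bloch's higher Chow group $\mr{CH}_{\mu}(X,\nu)\otimes\Lambda$ (indexing as in \cite{Deg}), which vanishes for $\mu>\dim X$ trivially — there are no dimension-$\mu$ cycles — and for $\mu=\dim X$, $\nu>0$ because the relevant cycle complex is concentrated in degree $0$ in top dimension. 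Concretely one can run an induction on $\dim X$ using the localization long exact sequence of \S2: choose a dense open $U\subset X$ smooth with complement $Z$ of strictly smaller dimension, use homotopy invariance and the computation for smooth $U$ (where $\rHbm$ is a twist of motivic cohomology, vanishing in the claimed range by the vanishing of $\mr{H}^j_{\mr{mot}}(U,\Lambda(i))$ for $i<0$ and for $i=0$, $j\neq 0$... here translated through duality), and the inductive hypothesis for $Z$.

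The main obstacle I anticipate is the bookkeeping in the second paragraph's reduction: matching the indexing conventions (the $\langle\cdot\rangle = (\cdot)[2\cdot]$ twist, the sign in $\rHbm_m = \pi_m$, the shift by $2d_S$) so that the two stated ranges $m>d$ and $m=d,\,n>0$ transport correctly to $\mu>D$ and $\mu=D,\,\nu>0$, together with the point that $S$ smooth need not be irreducible (so one should work component-by-component, or over $\pi_0(S)$, and take $d_S$ the dimension of the relevant component). The descent spectral sequence argument is routine given the Lemmas already proved, and the base-field vanishing is standard, so I would spend the bulk of the write-up making the duality reduction and the dimension inequality $d_X\leq d+d_S$ precise.
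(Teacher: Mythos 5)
Your proposal takes essentially the same route as the paper's proof: reduce to a smooth base by the $p$dh-descent spectral sequence \eqref{desSS} over a good hypercovering, then to $S=\mr{Spec}(k)$ via the purity isomorphism $p_S^!\cong p_S^*\langle d_S\rangle$ together with $\dim X\le d+d_S$, and conclude over the field by identifying $\rHbm$ with motivic cohomology/higher Chow groups on a smooth dense open (using perfectness of $k$) plus a localization induction on dimension. Apart from the order of the reductions and the indexing bookkeeping you already flag, this matches the paper's argument.
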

\begin{proof}
 First, assume that $S=\mr{Spec}(k)$.
 We claim that
 \begin{equation*}
  \rHbm_n(X,m):=\pi_n\rHbm(X/\mr{Spec}(k),\Lambda(m))=0
 \end{equation*}
 if $m>d=\dim(X)$ or $m=d$ and
 $n>2m$. Assume $X$ is smooth of equidimension $d$.
 Then we know that
 $\mr{H}^{\mr{BM}}_n(X,\Lambda(m))\cong\mr{H}_{\mc{M}}^{2d-n}(X,\Lambda(d-m))\cong\mr{CH}^{d-m}(X,n-2m;\Lambda)$,
 where $\mr{H}_{\mc{M}}$ is the motivic cohomology and the last isomorphism follows by \cite[Example 11.2.3]{CDbook}.
 Thus the claim follows\footnote{
 In fact, this holds also for $n<0$ by
 \cite[III 2.5, II Ex.\ 1.16 (a)]{H}.
 }
 because $\mr{CH}^{0}(X,i;\Lambda)\cong\mr{H}_{\mc{M}}^{-i}(X,\Lambda(0))\cong\mr{H}_{\mr{zar}}^{-i}(X,\Lambda)=0$ for $i>0$.
 In general, we proceed by the induction on the dimension of $X$.
 We may assume $X$ is reduced.
 There exists $Z\subset X$ such that $X\setminus Z$ is smooth and
 $\dim(Z)<d$ since $k$ is assumed perfect.
 We have the exact sequence
 \begin{equation*}
  \cdots
  \longrightarrow
  \mr{H}^{\mr{BM}}_n(Z,\Lambda(m))
   \longrightarrow
   \mr{H}^{\mr{BM}}_n(X,\Lambda(m))
   \longrightarrow
   \mr{H}^{\mr{BM}}_n(X\setminus Z,\Lambda(m))
   \longrightarrow
   \mr{H}^{\mr{BM}}_{n-1}(Z,\Lambda(m))\longrightarrow\cdots.
 \end{equation*}
 Assume $d\leq m$. Then $\mr{H}^{\mr{BM}}_n(Z,\Lambda(m))=0$ for any $n$
 since $\dim(Z)<d\leq m$ and the induction hypothesis. Thus
 $\mr{H}^{\mr{BM}}_n(X,\Lambda(m))\cong\mr{H}^{\mr{BM}}_n(U,\Lambda(m))$,
 and the claim follows by the smooth case we have already treated.
 We next assume that $S$ is smooth over $k$. We may assume that $S$ is
 of equidimension $e$. Let $\pi\colon S\rightarrow\mr{Spec}(k)$ be
 the structural morphism. Then we have
 \begin{align*}
  \mr{Hom}\bigl(\Hc(X/S,\Lambda_S)\!\left<m\right>,\Lambda_S[-n]
  \bigr)
  &\cong
  \Hom\bigl(\Hc(X/S,\Lambda_S)\!\left<m\right>,\pi^*\Lambda[-n]
  \bigr)\\
  &\cong
  \Hom\bigl(\Hc(X/S,\Lambda_S)\!\left<m\right>,\pi^!\Lambda
  \left<-e\right>[-n]
  \bigr)\\
  &\cong
  \Hom\bigl(\Hc(X)\!\left<m+e\right>,\Lambda[-n]\bigr).
 \end{align*}
 Since $\dim(X)\leq\dim(S)+d=e+d$, we get the vanishing by the $S=\mr{Spec}(k)$ case.

 Finally, we treat the general case.
 We take a $p$dh-hypercovering $S_\bullet\rightarrow S$ so that $S_i$ is smooth.
 Let $\mc{F},\mc{G}\in D(S)$.
 Then by $p$dh-descent spectral sequence \eqref{desSS}, we have
 \begin{equation*}
  E_2^{p,q}=\mr{R}^p\invlim_{\mbf{\Delta}}
   \mr{Hom}^q(\mc{F}_\bullet,\mc{G}_\bullet)\Longrightarrow
  \mr{Hom}^{p+q}(\mc{F},\mc{G}).
 \end{equation*}
 If $E_2^{p,q}=0$ for $q<0$, then $\mr{Hom}^i(\mc{G},\mc{F})=0$ for
 $i<0$. Thus, we get the claim by applying this to
 $\mc{F}=\Hc(X/S,\Lambda_S)\!\left<m\right>$ and
 $\mc{G}=\Lambda$.
\end{proof}

\begin{rem*}
 Consider the case where $p$ may not be invertible in $\Lambda$.
 If $S$ is smooth, then the proposition holds.
 If we further assume the resolution of singularities, the proposition also holds for any $f$.
\end{rem*}

\section{Construction of the trace map when the base is smooth}
\label{sec3}
Let $f\colon X\rightarrow S$ be a flat morphism. When $S$ is smooth,
we will construct a map which is supposed to be the same as
$\mr{Tr}_f^{\tau}$ in this section.

\subsection{}
\label{constmapsmo}
For a scheme $Z$, we often denote $\dim(Z)$ by $d_Z$.
Let $f\colon X\rightarrow S$ be (any) separated morphism of finite type such that $S$ is smooth equidimensional, and put $d_f:=d_X-d_S$.
In this case, let us construct a morphism $t_f\colon f_!\Lambda_X\!\left<d_f\right>\rightarrow\Lambda_S$,
which we will show to be equal to $\mr{Tr}_f$ when $f$ is flat.

Let us start to construct $t_f$. Considering componentwise, it suffices to construct the morphism when $S$ is connected.
For any separated scheme $X$ of finite type over $k$, we have the canonical isomorphism
\begin{equation*}
 \gamma_X\colon\rHbm_{2n}(X,\Lambda(n))
  \xrightarrow{\ \sim\ }
  \mr{CH}_n(X;\Lambda)
\end{equation*}
by \cite[Corollary~3.9]{J}. We have
\begin{align*}
 \rHbm_{2d_f}(X/S,\Lambda(d_f))
 \cong
 \rHbm_{2d_X}(X,\Lambda(d_X))
 \xrightarrow[\sim]{\ \gamma_X\ }
 \mr{CH}_{d_X}(X;\Lambda),
\end{align*}
where the first isomorphism follows since $g^*\!\left<d\right>\xrightarrow{\sim}g^!$ for any equidimensional smooth morphism $g$ of relative dimension $d$.
Let $X=\bigcup_{i\in I} X_i$ be the irreducible components, and let $I'\subset I$ be the subset of $i$ such that $\dim(X_i)=d_X$.
Let $\xi_i$ be the generic point of $X_i$.
The element in $\rHbm_{2d}(X/S,\Lambda(d))$ corresponding via the isomorphism above to the element
$\sum_{i\in I'}\mr{lg}(\mc{O}_{X,\xi_i})\cdot[X_{i,\mr{red}}]\in\mr{CH}_{d_X}(X;\Lambda)$ on the right hand side is defined to be $t_f$.

Let us end this paragraph with a simple observation.
Let $U\subset X$ be an open dense subscheme.
Then the restriction map $\rHbm_{2d_f}(X/S,\Lambda(d_f))\rightarrow\rHbm_{2d_f}(U/S,\Lambda(d_f))$ is an isomorphism.
Indeed, we have $\rHbm_{2d_h}(X/Z,\Lambda(d_h))\cong\mr{CH}_{d_X}(X;\Lambda)\cong\Lambda^{\oplus r_X}$,
where $r_X$ is the set of irreducible components of $X$ of dimension $d_X$ by the computation above.
Since $r_X$ and $r_U$ are the same, we get the claim.

\subsection{}
By the setup \ref{fixcoh}, we may apply \cite[Introduction, Theorem~1]{Deg}.
In particular, for a morphism between smooth schemes $f\colon X\rightarrow Y$ we have the fundamental class
$\overline{\eta}_f\in\rHbm_{2d_f}(X/Y,\Lambda(d_f))$.
When $Y=\mr{Spec}(k)$, we sometimes denote $\overline{\eta}_f$ by $\overline{\eta}_X$.
As we expect, we have the following comparison.
\begin{lem*}\label{twodefcoin}
 Assume $f\colon X\rightarrow Y$ is a morphism between smooth equidimensional schemes.
 Then $t_f=\overline{\eta}_f$ in $\rHbm_{2d_f}(X/Y,\Lambda(d_f))$.
\end{lem*}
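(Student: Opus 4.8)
The plan is to compare the two elements $t_f$ and $\overline{\eta}_f$ of $\rHbm_{2d_f}(X/Y,\Lambda(d_f))$ by reducing, via the projection $\pi\colon Y\rightarrow\mr{Spec}(k)$ and relative purity, to the absolute case $Y=\mr{Spec}(k)$, and then to identify both elements inside the Chow group $\mr{CH}_{d_X}(X;\Lambda)$ using the cycle class map $\gamma_X$ of \cite[Corollary~3.9]{J}. Concretely, since $Y$ is smooth equidimensional, say of dimension $d_Y$, the purity isomorphism $\pi^*\!\left<d_Y\right>\cong\pi^!$ identifies $\rHbm_{2d_f}(X/Y,\Lambda(d_f))$ with $\rHbm_{2d_X}(X/\mr{Spec}(k),\Lambda(d_X))$, compatibly with the way $t_f$ was defined in~\ref{constmapsmo} (this compatibility is built into the definition there). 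So it suffices to check that, under this same identification, the fundamental class $\overline{\eta}_f$ is sent to $\overline{\eta}_X$, i.e.\ that $\overline{\eta}_f$ and $\overline{\eta}_X$ correspond under composition with $\overline{\eta}_\pi=\overline{\eta}_Y$; this is exactly the associativity/composition property of fundamental classes in \cite[Introduction, Theorem~1]{Deg}, together with the fact that for the smooth morphism $\pi$ the fundamental class $\overline{\eta}_\pi$ is the canonical purity class. Thus I am reduced to proving the statement when $Y=\mr{Spec}(k)$.

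In the absolute case I must show $\gamma_X(\overline{\eta}_X)=\sum_{i\in I'}\mr{lg}(\mc{O}_{X,\xi_i})\cdot[X_{i,\mr{red}}]$ in $\mr{CH}_{d_X}(X;\Lambda)$, which by definition of $t_f$ in~\ref{constmapsmo} is precisely the assertion $t_X=\overline{\eta}_X$. Since $X$ is smooth and equidimensional here, it is reduced and irreducible on each connected component, so $I'=I$ indexes the connected components and every $\mr{lg}(\mc{O}_{X,\xi_i})=1$; the claim becomes that $\gamma_X(\overline{\eta}_X)=[X]=\sum_i[X_i]$, the fundamental cycle. This is the normalization property of the motivic cycle class map: the fundamental class of a smooth scheme maps to its fundamental cycle under $\gamma_X$. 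One reference is \cite{Deg} itself, where the fundamental class is constructed so as to be compatible with cycle classes; alternatively one can invoke the compatibility of $\gamma$ in \cite[Corollary~3.9]{J} with the construction of $\overline{\eta}_X$ via the deformation to the normal cone, both of which are designed to recover $[X]$ on a smooth scheme. I would spell out which normalization statement is being cited and check that the two conventions (Déglise's orientation and Jin's cycle class map) agree, which is where one must be slightly careful about twists and signs.

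The main obstacle is this last bookkeeping step: matching the normalization of the fundamental class $\overline{\eta}_X$ from \cite[Introduction, Theorem~1]{Deg} with the normalization of the isomorphism $\gamma_X$ from \cite[Corollary~3.9]{J}. Both are ultimately pinned down by the requirement that a smooth scheme go to its fundamental cycle, but the two sources use possibly different models (one through the six-functor purity formalism, one through Bloch's higher Chow groups), and a clean proof requires citing a compatibility statement relating the two — or, failing that, reducing further to the case $X=\mr{Spec}(k)$ or $X=\mb{A}^n$ where both sides are manifestly the generator, and then propagating by the multiplicativity of fundamental classes (the $\mb{A}^1$-orientation $\eta=[\mb{A}^1]$ appears here) together with étale-local triviality of smooth morphisms. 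Everything else — the purity reduction, the long exact sequences, the identification $I'=I$ in the smooth case — is routine given the results already recalled in the excerpt.
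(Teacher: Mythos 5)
Your argument follows essentially the same route as the paper: reduce to the absolute case $Y=\mr{Spec}(k)$ by composing with the purity class of the smooth base and invoking the associativity of fundamental classes (\cite[Introduction, Theorem~1.2]{Deg}), then settle the absolute case by identifying $\overline{\eta}_X$ with the fundamental cycle under $\gamma_X$. The normalization you flag as the main obstacle is exactly what the paper disposes of by citation: \cite[Theorem~2.5.3]{Deg} identifies the fundamental class of a smooth morphism with the purity class of \cite[Proposition~2.3.11]{Deg}, and \cite[Proposition~3.12]{J} says that this class corresponds to the fundamental cycle, so no further reduction to $\mb{A}^n$ or \'{e}tale-local argument is needed.
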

\begin{proof}
 Assume $Y=\mr{Spec}(k)$.
 In this case, $f$ is smooth.
 Then by \cite[Theorem~2.5.3]{Deg}, the fundamental class $\overline{\eta}_f$ is equal to the one constructed in \cite[Proposition~2.3.11]{Deg},
 which is nothing but the one we constructed above by \cite[Proposition~3.12]{J}.
 Let us treat the general case. For a $k$-scheme $Z$, denote by $p_Z$
 the structural morphism.
 Unwinding the definition, our $t_f$ is the unique dotted map so that the following diagram on the right is commutative:
 \begin{equation*}
 \xymatrix{
  \Lambda_X\!\left<d_f\right>\ar@{.>}[r]
  \ar[rd]_{t^{\mr{adj}}_{p_X}}&
  f^!\Lambda_Y
  \ar[d]^-{f^!\overline{\eta}_Y} & &f_!\Lambda_X\!\left<d_f\right>\ar@{.>}[r]
  \ar[d]_{f_!t^{\mr{adj}}_{p_X}}&
  \Lambda_Y
  \ar[d]^-{\overline{\eta}_Y}\\
  &f^!p_Y^!\Lambda\!\left<-d_Y\right>, & &
   f_!p_X^!\Lambda_X\!\left<-d_Y\right>
  \ar[r]_-{\mr{adj}_f}&
  p_Y^!\Lambda\!\left<-d_Y\right>.
  }
 \end{equation*}
 Here, $t^{\mr{adj}}_g$ denotes the morphism given by taking adjoint to $t_g$.
 Equivalently, $t_f$ is the unique dotted map so that the diagram above on the left is commutative.
 Thus, it suffices to check that the diagram replacing the
 dotted arrow by $\overline{\eta}_f$ commutes. From what we have
 checked, $t^{\mr{adj}}_{p_Z}=\overline{\eta}_{p_Z}^{\mr{adj}}$ for any
 smooth scheme $Z$. Thus, the desired commutativity follows by the
 associativity property of fundamental class
 (\emph{cf.}~\cite[Introduction, Theorem~1.2]{Deg}).
\end{proof}

\begin{lem}\label{bccommsim}
\leavevmode
\begin{enumerate}
  \item\label{bccommsim-1}
       Assume we are given morphisms $X\xrightarrow{f}Y\xrightarrow{g}Z$ such that $Y$ and $Z$ are smooth and equidimensional. 
       Let the composition be $h$.
       We have $t_g\bullet t_f=t_h$ in $\rHbm_{2d_h}(X/Z,\Lambda(d_h))$.
	
  \item\label{bccommsim-2}
       Consider the Cartesian diagram \eqref{stdcartdia}.
       Assume further that $Y$ and $Y'$ are smooth equidimensional and $f$ is flat.
       The map $g^*\colon\rHbm_{2d_f}(X/Y,\Lambda(d_f))\rightarrow\rHbm_{2d_{f}}(X'/Y',\Lambda(d_{f}))$ sends $t_f$ to $t_{f'}$.
       
  \item\label{bccommsim-3}
       Consider a proper morphism $f\colon X\rightarrow Y$ and a morphism $g\colon Y\rightarrow Z$ such that $Z$ is smooth and equidimensional.
       Put $h:=g\circ f$.
       Then the map $f_*\colon\rHbm_{2d_h}(X/Z,\Lambda(d_h))\rightarrow\rHbm_{2d_h}(Y/Z,\Lambda(d_h))$
       sends $t_h$ to $\deg(X/Y)\cdot t_g$ when $t_h=t_g$ and $0$ otherwise.
 \end{enumerate} 
\end{lem}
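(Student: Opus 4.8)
The plan is to reduce all three assertions to elementary cycle-theoretic computations. The common device is a reformulation of the construction of \S\ref{constmapsmo}: for a separated morphism $f\colon X\to Y$ of finite type with $Y$ smooth equidimensional, the isomorphism $\rHbm_{2d_f}(X/Y,\Lambda(d_f))\xrightarrow{\ \sim\ }\rHbm_{2d_X}(X,\Lambda(d_X))$ used to define $t_f$ is exactly the bivariant product $(-)\bullet\overline{\eta}_{p_Y}$ with the fundamental class of the smooth structural morphism $p_Y\colon Y\to\mr{Spec}(k)$; indeed the purity isomorphism $p_Y^*\langle d_Y\rangle\cong p_Y^!$ behind it is, by construction, the one induced by $\overline{\eta}_{p_Y}$, which is precisely what the diagrams in the proof of Lemma~\ref{twodefcoin} record. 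Hence $t_f$ is characterised by $t_f\bullet\overline{\eta}_{p_Y}=\gamma_X^{-1}([X])$, where $[X]:=\sum_{i\in I'}\mr{lg}(\mc{O}_{X,\xi_i})\,[X_{i,\mr{red}}]\in\mr{CH}_{d_X}(X;\Lambda)$, and since $(-)\bullet\overline{\eta}_{p_Y}$ is an isomorphism it suffices, in each case, to compare the classes obtained after applying it together with $\gamma$.

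For \eqref{bccommsim-1} --- which imposes no smoothness hypothesis on $X$, and so cannot be read off from Lemma~\ref{twodefcoin} directly --- apply $(-)\bullet\overline{\eta}_{p_Z}$. By Lemma~\ref{twodefcoin}, $t_g=\overline{\eta}_g$; combining associativity of the bivariant product with the associativity property of fundamental classes \cite[Introduction, Theorem~1.2]{Deg} (so that $\overline{\eta}_g\bullet\overline{\eta}_{p_Z}=\overline{\eta}_{p_Z\circ g}=\overline{\eta}_{p_Y}$) gives $(t_g\bullet t_f)\bullet\overline{\eta}_{p_Z}=t_f\bullet\overline{\eta}_{p_Y}=\gamma_X^{-1}([X])=t_h\bullet\overline{\eta}_{p_Z}$. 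As $(-)\bullet\overline{\eta}_{p_Z}$ is injective, $t_g\bullet t_f=t_h$.

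For \eqref{bccommsim-3}, the bivariant compatibility of proper pushforward with products, $f_*(\alpha\bullet\overline{\eta}_{p_Z})=(f_*\alpha)\bullet\overline{\eta}_{p_Z}$, shows that after $(-)\bullet\overline{\eta}_{p_Z}$ and $\gamma$ the map $f_*$ becomes the proper pushforward of cycles $\mr{CH}_{d_X}(X)\to\mr{CH}_{d_X}(Y)$, using the naturality of $\gamma$ for proper pushforward \cite{J}. It then remains to push forward $[X]$. If $d_X>d_Y$ the target $\mr{CH}_{d_X}(Y)$ vanishes, so $f_*t_h=0$. If $d_X=d_Y$, each top-dimensional component $X_i$ is either generically finite over a component $Y_j$ of $Y$ or maps into a smaller subvariety (contributing $0$), and the classical length-of-the-generic-fibre identity $\deg(X/Y)|_{Y_j}=\sum_{i\,:\,\overline{f(X_i)}=Y_j}\mr{lg}(\mc{O}_{X,\xi_i})\,[\kappa(\xi_i):\kappa(\eta_j)]$ ($\eta_j$ the generic point of $Y_j$) yields $f_*[X]=\deg(X/Y)\cdot[Y]$; since $Y$ is smooth, $[Y]$ is its fundamental cycle, which corresponds to $t_g$. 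So $f_*t_h=\deg(X/Y)\cdot t_g$ when $d_X=d_Y$ (so that $t_g$ lies in the target group $\rHbm_{2d_h}(Y/Z)$), and $f_*t_h=0$ when $d_X>d_Y$, as asserted.

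The hardest statement is \eqref{bccommsim-2}. Reducing as above (via $(-)\bullet\overline{\eta}_{p_Y}$, $(-)\bullet\overline{\eta}_{p_{Y'}}$ and the $\gamma$'s), it amounts to the claim that the bivariant pullback $g^*\colon\rHbm_{2d_f}(X/Y)\to\rHbm_{2d_f}(X'/Y')$ corresponds, under these identifications with Chow groups, to the refined Gysin homomorphism $\mr{CH}_{d_X}(X)\to\mr{CH}_{d_{X'}}(X')$ attached to $g$ --- which is an lci morphism, being a morphism of smooth schemes. Granting this, the cycle-level identity is short: since $f$, hence $f'$, is flat, one has $[X]=f^*[Y]$, the Gysin map $g^!$ commutes with the flat pullback $f^*$, and $g^![Y]=[Y']$, whence $g^![X]=[X']$ (see Fulton, \emph{Intersection theory}, Ch.~6); therefore $g^*t_f=t_{f'}$. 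The technical heart is thus the identification of the bivariant pullback with the classical Gysin map. I would establish it by factoring $g$ as a regular closed immersion of smooth schemes followed by a smooth morphism: for the smooth factor $g^*$ agrees with flat pullback after the Tate twist (relative purity), while for the closed-immersion factor one argues through the deformation-to-the-normal-cone description of $\overline{\eta}_g$, invoking the compatibility of Déglise's fundamental-class formalism \cite{Deg} (see also \cite{CDbook}) with intersection theory via \cite{J}. An alternative that sidesteps the general comparison uses the open-dense-subscheme remark closing \S\ref{constmapsmo}: as $f$ and $f'$ are flat, $t_f$ and $t_{f'}$ restrict to $t_{f|_V}$, $t_{f'|_{V'}}$ on any dense open $V\subseteq X$ with $V'=V\times_Y Y'$ dense in $X'$, so one may shrink $X$; but arranging $V$ to be sufficiently well-behaved over $k$ is itself delicate in characteristic $p$, so I expect the Gysin comparison to be the cleaner route.
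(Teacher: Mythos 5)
Your proposal is sound and, on the whole, runs along the same lines as the paper's proof, with one genuinely different twist. For part \eqref{bccommsim-1} your argument differs from the paper's: you characterize $t_f$ by the identity $t_f\bullet\overline{\eta}_{p_Y}=\gamma_X^{-1}([X])$ (the identification of the defining isomorphism of \S\ref{constmapsmo} with the product against $\overline{\eta}_{p_Y}$ is legitimate --- it is exactly what the paper asserts ``by construction'' inside the proof of Lemma~\ref{smcomp}, and rests on \cite[Theorem~2.5.3]{Deg} --- though you should cite that rather than only gesture at the diagrams of Lemma~\ref{twodefcoin}), and then conclude by associativity and injectivity of $(-)\bullet\overline{\eta}_{p_Z}$. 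The paper instead first reduces to $X$ reduced, shrinks to a dense open so that $X$ is smooth (using the invariance of $\rHbm_{2d_h}(X/Z,\Lambda(d_h))$ under such shrinking from \S\ref{constmapsmo}), and only then applies Lemma~\ref{twodefcoin} together with the associativity of fundamental classes; your route avoids the shrinking step and is arguably cleaner. Part \eqref{bccommsim-3} agrees with the paper, which simply records it as a reformulation of \cite[Proposition~3.11]{J}; your reading of the condition ``$t_h=t_g$'' as the dimension condition $d_X=d_Y$ is the intended one.

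For part \eqref{bccommsim-2} your strategy is exactly the paper's --- factor $g$ as a regular closed immersion followed by a smooth morphism, treat the smooth factor by relative purity, and reduce the immersion case to compatibility of the motivic Gysin formalism with refined Gysin maps on Chow groups, finishing with the cycle identity $g^![X]=[X']$ from flatness of $f$ --- but you leave the decisive comparison (bivariant base change $g^*$ versus refined Gysin under the Chow identifications) at the level of a sketch and citations, and this is precisely where the paper does its real work: it pins down $g^*t_f$ and $t_{f'}$ as unique arrows rendering explicit diagrams commutative, uses transversality of $f$ and $g$ (flatness of $f$) with \cite[Introduction, Theorem~1.3]{Deg} for one square, and verifies the remaining $\clubsuit$-marked square in the regular-immersion case via Jin's morphism $R_f(g)$, \cite[Proposition~3.15]{J}, and a projection-formula diagram. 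Nothing in your outline would fail and the references you name are the right ones, but this commutativity check is asserted rather than carried out, so it is the one place where your write-up falls short of a complete proof. You are also right to distrust the ``shrink to a dense open'' alternative for \eqref{bccommsim-2}: density of $V\times_YY'$ in $X'$ can fail after base change, so the Gysin comparison is indeed the correct route.
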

\begin{proof}
 Let us check the claim \eqref{bccommsim-1} of the lemma. By construction of $t_f$, we may assume
 that $X$ is reduced.
 By \S\ref{constmapsmo}, we may shrink $X$ by its dense open subscheme since $\rHbm_{2d_h}(X/Z,\Lambda(d_h))$ remains the same.
 Thus, we may assume that $X$ is smooth as well.
 In this case, we get the compatibility by Lemma \ref{twodefcoin} and
 \cite[Introduction, Theorem~1.2]{Deg}.
 The final claim \eqref{bccommsim-3} is just a reformulation of \cite[Proposition~3.11]{J}.
 Let us check the claim \eqref{bccommsim-2} of the lemma.
 Since $Y$, $Y'$ are smooth, we may factor $g$ into a regular immersion followed by a smooth morphism.
 Thus, it suffices to check the case where $g$ is a regular immersion and a smooth morphism separately.
 In both cases, consider the following diagram:
 \begin{equation*}
  \xymatrix@C=30pt{
   f'_!\Lambda_{X'}\!\left<d_f\right>
   \ar@{-}[d]_{\sim}\ar@/^15pt/[drrr]^(.7){t^{\mr{adj}}_{p_{X'}}}
   \ar@{.>}@/_25pt/@<-4ex>[dd]\ar@{}[drr]|{\clubsuit}&&&\\
   g^*f_!\Lambda_X\!\left<d_f\right>
   \ar[r]_-{t^{\mr{adj}}_{p_X}}\ar@{.>}[d]&
   g^*f_!p_X^!\Lambda\!\left<-d_Y\right>
   \ar@{-}[r]^-{\sim}\ar[d]^{\mr{adj}_f}&
   f'_!g'^*p_X^!\Lambda\!\left<-d_Y\right>
   \ar[r]_-{\overline{\eta}^{\mr{adj}}_{g'}}&
   f'_!g'^!p_X^!\Lambda\!\left<-d_{Y'}\right>
   \ar[d]^{\mr{adj}_{f'}}\\
  g^*\Lambda_Y\ar[r]^-{t^{\mr{adj}}_{p_Y}}
   \ar@/_20pt/[rrr]_(.3){t^{\mr{adj}}_{p_{Y'}}}&
   g^*p_Y^!\Lambda\!\left<-d_Y\right>
   \ar[rr]^-{\overline{\eta}^{\mr{adj}}_{g}}&
   &
   g^!p_Y^!\Lambda\!\left<-d_{Y'}\right>.
   }
 \end{equation*}
 The map $g^*t_f$ is the unique straight dotted arrow redering the left small square diagram commutes,
 and $t_{f'}$ is the unique bent dotted arrow rendering the outer largest diagram commutes.
 Since $f$ is flat, $f$ is transversal to $g$ in the sense of \cite[Example~3.1.2]{Deg}.
 This implies that $f^*(\overline{\eta}_{g})=\overline{\eta}_{g'}$ by \cite[Introduction, Theorem~1.3]{Deg}.
 By taking the adjoint, this implies that the right square is commutative.
 Since $Y$, $Y'$ are assumed to be smooth, we have $t^{\mr{adj}}_{p_Y}=\overline{\eta}^{\mr{adj}}_{p_Y}$ and
 $t^{\mr{adj}}_{p_{Y'}}=\overline{\eta}^{\mr{adj}}_{p_{Y'}}$ by the previous lemma.
 Since $g$, $p_Y$, $p_{Y'}$ are gci morphism, the bottom semicircular diagram is commutative by \cite[Introduction, Theorem~1.2]{Deg}.
 In order to check the equality in the claim, it remains to check that the $\clubsuit$-marked diagram commutes.

 When $g$ is smooth, the verification is easy, so we leave it to the reader.
 Assume $g$ is a regular immersion. In \cite[Definition~2.31]{J}, Jin defines a morphism
 $R_f(g)\colon\Hc(X')\rightarrow\Hc(X)\!\left<c\right>$ where $c=\dim(Y)-\dim(Y')$.
 By construction, this is defined as the composition
 \begin{equation*}
  p_{X'!}\Lambda_{X'}\cong p_{X!}g'_!f'^*\Lambda_{Y'}\cong
   p_{X!}f^*g_!\Lambda_{Y'}\xrightarrow{\ \overline{\eta}_g\ }
   p_{X!}f^*\Lambda_{Y}\left<c\right>.
 \end{equation*}
 Applying \cite[Introduction, Theorem~1.3]{Deg}, this is the same as $p_{X!}\overline{\eta}_{g'}$.
 Now, since $g^!([X])=[X']$ in $\mr{CH}_{d_{X'}}(X')$ by the flatness of $f$,
 \cite[Proposition~3.15]{J} implies that the following diagram on the left commutes:
 \begin{equation*}
  \xymatrix@C=25pt{
   \Hc(X')
   \ar[rd]_{t_{p_{X'}}}\ar[rr]^-{R_f(g)=p_{X!}\overline{\eta}_{g'}}&&
   \Hc(X)\!\left<c\right>\ar[ld]^-{t_{p_X}}\\
  &\Lambda\!\left<-d_{X'}\right>,&}
   \qquad
   \xymatrix{
   g'^*p_X^*\Lambda\!\left<d_{X'}\right>
   \ar[r]^-{\overline{\eta}_{g'}}\ar[d]_{t_{p_X}}&
   g'^!p_X^*\Lambda\!\left<d_X\right>
   \ar[d]^{t_{p_X}}\\
  g'^*p_X^!\Lambda\!\left<c\right>
   \ar[r]^-{\overline{\eta}_{g'}}&
   g'^!p_X^!\Lambda.
   }
 \end{equation*}
 Taking the adjunction, the verification is reduced to the commutativity of the right diagram above.
 This follows by the following commutative diagram:
 \begin{equation*}
   \xymatrix@C=40pt{
   g'^*p_X^*\Lambda\!\left<d_{X'}\right>
   \ar[d]_{g^*(t_{p_X})}\ar[r]^-{\overline{\eta}_{g'}\otimes\mr{id}}&
   g'^!\Lambda_X\otimes g'^*p_X^*\Lambda\!\left<d_X\right>
   \ar[r]^-{\mr{proj}}\ar[d]^{\mr{id}\otimes g'^*(t_{p_X})}&
   g'^!p_X^*\Lambda\!\left<d_X\right>
   \ar[d]^{g'^!(t_{p_X})}\\
  g'^*p_X^!\Lambda\!\left<c\right>\ar[r]^-{\overline{\eta}_{g'}\otimes\mr{id}}&
   g'^!\Lambda_X\otimes g'^*p_X^!\Lambda
   \ar[r]^-{\mr{proj}}&
   g'^!p_X^!\Lambda.
   }  
 \end{equation*}
 Here, $\mr{proj}$ are the morphisms induced by the projection formula (or more precisely \cite[(1.2.8.a)]{Deg}),
 and we conclude the proof.
\end{proof}

\begin{lem}
 \label{smcomp}
 Assume we have a morphism of bivariant theories $\tau$ in Theorem {\normalfont\ref{mainres}}.
 Then for a flat morphism $f\colon X\rightarrow S$ such that $S$ is smooth and equidimensional,
 we must have an equality $\mr{Tr}^{\tau}_f=t_f$.
\end{lem}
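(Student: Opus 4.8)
The plan is to reduce the assertion to the \emph{absolute case} $S=\mr{Spec}(k)$ --- namely the statement ``$\mr{Tr}^{\tau}_{p_X}:=\tau([X]_{X/k})$ equals $t_{p_X}$ in $\rHbm_{2d_X}(X/k,\Lambda(d_X))$, for every equidimensional $X\in\mr{Sch}_{/k}$'' (call this $(\ast)$) --- and to prove $(\ast)$ by a further reduction to the case of a smooth affine $X$, where Noether normalization and the torsion-freeness of $\Lambda$ finish the job. The passage from $(\ast)$ to the lemma is as follows: given a flat $f\colon X\to S$ of relative dimension $d_f$ with $S$ smooth equidimensional, composition of flat cycles gives $[X]_{X/k}=[X]_{X/S}\bullet[S]_{S/k}$ in $z_{\mr{equi}}(X/k,d_X)$; applying $\tau$ and using compatibility with $\bullet$, together with Lemma~\ref{bccommsim}\,\eqref{bccommsim-1} on the $t$-side, yields $\mr{Tr}^{\tau}_f\bullet\mr{Tr}^{\tau}_{p_S}=\mr{Tr}^{\tau}_{p_X}$ and $t_f\bullet t_{p_S}=t_{p_X}$. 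By $(\ast)$ we then have $\mr{Tr}^{\tau}_{p_X}=t_{p_X}$ and $\mr{Tr}^{\tau}_{p_S}=t_{p_S}=\overline{\eta}_S$ (the last by Lemma~\ref{twodefcoin}), so $\mr{Tr}^{\tau}_f\bullet\overline{\eta}_S=t_f\bullet\overline{\eta}_S$; and pairing with $\overline{\eta}_S$ on the right is precisely the isomorphism $\rHbm_{2d_f}(X/S,\Lambda(d_f))\xrightarrow{\ \sim\ }\rHbm_{2d_X}(X/k,\Lambda(d_X))$ used in \S\ref{constmapsmo} (the one coming from $p_S^*\!\left<d_S\right>\xrightarrow{\ \sim\ }p_S^!$). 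Hence $\mr{Tr}^{\tau}_f=t_f$.

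To prove $(\ast)$ I would first reduce to $X$ integral: write $[X]=\sum_i\mr{lg}(\mc{O}_{X,\xi_i})\,\iota_{i*}([X_{i,\mr{red}}])$ over the irreducible components, with $\iota_i\colon X_{i,\mr{red}}\hookrightarrow X$ the (proper) closed immersions; since $\tau$ is additive and compatible with proper pushforward while $t$ behaves the same way under $\iota_{i*}$ (Lemma~\ref{bccommsim}\,\eqref{bccommsim-3}, equivalently \cite[Proposition~3.11]{J}), $(\ast)$ for $X$ follows from $(\ast)$ for each integral $X_{i,\mr{red}}$. Next, for $X$ integral I would shrink to a dense affine open $U\subset X$ smooth over $k$ (possible since $k$ is perfect). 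For this one must know that $\tau$ commutes with restriction along a dense open immersion $j\colon U\hookrightarrow X$; this is a separate small point. Restriction is the bivariant product with the flat cycle $[U]_{U/X}\in z_{\mr{equi}}(U/X,0)$, and $\tau([U]_{U/X})$ is the class $[\epsilon_j]\in\rHbm_0(U/X,\Lambda(0))=\pi_0\Hom_{\mc{D}_X}(j_!\Lambda_U,\Lambda_X)$ of the counit $j_!j^*\to\mr{id}$: indeed, pulling back along $j$ itself identifies $\rHbm_0(U/X,\Lambda(0))\xrightarrow{\ \sim\ }\rHbm_0(U/U,\Lambda(0))$ (because $i^*j_!\simeq0$ for the complementary closed immersion $i$), and both $\tau([U]_{U/X})$ and $[\epsilon_j]$ restrict to the unit. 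Since restriction is injective on the relevant Borel--Moore groups (\S\ref{constmapsmo}), $(\ast)$ for $X$ reduces to $(\ast)$ for $U$; thus we may assume $X$ smooth, integral and affine of dimension $d$.

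In this situation Noether normalization provides a finite surjective $\phi\colon X\to\mb{A}^d$, which is flat by miracle flatness ($X$ Cohen--Macaulay, $\mb{A}^d$ regular, fibers of dimension $0$), hence of constant degree $\delta>0$. It then remains to combine: (i) $\tau([\mb{A}^d]_{\mb{A}^d/k})=\overline{\eta}_{\mb{A}^d}$, which follows from the compatibility of $\tau$ with the $\mb{A}^1$-orientation --- giving $\tau([\mb{A}^1])=\overline{\eta}_{\mb{A}^1}$ since the canonical orientation of $\rHbm_{2*}(-,\Lambda(*))$ recalled in Section~\ref{sec2} is exactly this fundamental class --- by iterating the factorization of $[\mb{A}^d]_{\mb{A}^d/k}$ as a product of base-changes of $\eta$ along the successive projections $\mb{A}^d\to\mb{A}^{d-1}\to\dots$, using compatibility of $\tau$ with $\bullet$ and base change together with the associativity and base-change properties of fundamental classes (\cite[Introduction, Theorem~1]{Deg}); and (ii) $\tau([X]_{X/\mb{A}^d})=\overline{\eta}_\phi$. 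For (ii): $\rHbm_0(X/\mb{A}^d,\Lambda(0))\cong\mr{CH}_d(X;\Lambda)\cong\Lambda$ with $\overline{\eta}_\phi=t_\phi$ a generator, so $\tau([X]_{X/\mb{A}^d})=c\cdot\overline{\eta}_\phi$ for some $c\in\Lambda$; pushing forward along the proper $\phi$ and using $\phi_*[X]_{X/\mb{A}^d}=\delta\cdot 1$ together with $\phi_*\overline{\eta}_\phi=\delta\cdot 1$ (Lemma~\ref{bccommsim}\,\eqref{bccommsim-3}) forces $c\delta=\delta$, hence $c=1$ as $\Lambda=\mb{Z}[1/p]$ is torsion-free. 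Finally $[X]_{X/k}=[X]_{X/\mb{A}^d}\bullet[\mb{A}^d]_{\mb{A}^d/k}$ and (i)--(ii) give $\tau([X]_{X/k})=\overline{\eta}_\phi\bullet\overline{\eta}_{\mb{A}^d}=\overline{\eta}_{p_X}=t_{p_X}$ (associativity of fundamental classes, then Lemma~\ref{twodefcoin}), which is $(\ast)$.

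The step I expect to demand the most care is not any single geometric input but the bookkeeping needed to know that the three bivariant operations on $\rHbm_{2*}(-,\Lambda(*))$ agree with D\'{e}glise's operations on fundamental classes and with Jin's description through Chow groups --- this is exactly what legitimizes each of the reductions above. Once that is in hand, the one potentially subtle ``ramified'' phenomenon, the behaviour of $\tau$ on the branched cover $\phi$, is dissolved entirely by the degree computation and the torsion-freeness of $\mb{Z}[1/p]$, so no direct study of ramification is required.
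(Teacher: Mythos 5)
Your proposal is correct, and it rests on the same basic ingredients as the paper's proof (Jin's identification of top Borel--Moore homology with Chow groups, D\'eglise's fundamental classes, multiplicativity of $\tau$ and of $t$, compatibility with the $\mb{A}^1$-orientation, torsion-freeness of $\Lambda=\mb{Z}[1/p]$, and shrinking to dense opens), but the reduction scheme is genuinely different. The paper first settles the identity and open-immersion cases and the finite \'etale case over an \emph{arbitrary} smooth equidimensional base (pushforward injectivity plus a degree count), then does $S=\mr{Spec}(k)$ by shrinking an integral $X$ until it is finite \'etale over an open subscheme of $\mb{A}^{d_f}$, and finally treats general $S$ via the isomorphism built from restriction to a dense open with smooth reduction and the product with $t_S$. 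You instead cancel $\overline{\eta}_S$ at the outset, using that $x\mapsto x\bullet\overline{\eta}_S$ is the purity/adjunction isomorphism $\rHbm_{2d_f}(X/S,\Lambda(d_f))\cong\rHbm_{2d_X}(X/k,\Lambda(d_X))$ of \S\ref{constmapsmo}, so that everything reduces to the absolute statement; and in the absolute case you replace the generic finite-\'etale factorization by Noether normalization, i.e.\ a finite flat (possibly inseparable) $\varphi\colon X\rightarrow\mb{A}^d$, handled by the degree argument in $\mr{CH}_d(X;\Lambda)\cong\Lambda$ together with Lemma~\ref{bccommsim}-(3). This buys independence from generic \'etaleness (the degree argument works verbatim for inseparable covers) and dispenses with the finite \'etale case over a general smooth base; the price is that you must make explicit that $\tau$ commutes with restriction to dense opens (your identification of $\tau([U]_{U/X})$ with the counit class via pullback along $j$), a compatibility the paper also needs when it ``shrinks $X$'' but leaves terse. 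The only points to watch, at the same level of detail as the paper itself, are $\tau(\mathbf{1})=\mathbf{1}$ (an idempotence argument plus compatibility with the orientation, or pullback from $\mr{Spec}(k)$) and the cycle-level identities $[X]_{X/S}\bullet[S]_{S/k}=[X]_{X/k}$ and $[U]_{U/X}\bullet x=x|_U$ in Suslin--Voevodsky's composition product, which is exactly the bookkeeping you flag.
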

\begin{proof}
 First, consider the case where $X=S$.
 Since $\tau$ preserves the product structure, $\tau(\mr{id}_S)$ must send the unit element
 $\mbf{1}=[S]\in z_{\mr{equi}}(S/S,0)$ to $\mbf{1}=\mr{id}\in\rHbm_0(S/S,0)$.
 By \cite[Proposition~3.12]{J}, $t_{\mr{id}}$ is equal to $\mr{id}$ as well, and the claim follows in this case.
 When $f$ is an open immersion, we may argue similarly.

 Now, let $f\colon X\rightarrow S$ be a finite \'{e}tale morphism such that $S$ is smooth and equidimensional of dimension $d$.
 We may assume $X$ and $S$ are integral, and the degree of $f$ is $n$.
 By $f_*\colon z_{\mr{equi}}(X/S,0)\rightarrow z_{\mr{equi}}(S/S,0)$,
 $[X]$ is sent to $n\cdot[S]$ in $z_{\mr{equi}}(S/S,0)$ by definition of $f_*$.
 This implies that $f_*(\mr{Tr}^{\tau}_f)=n\cdot\mr{id}$ where $f_*\colon\rHbm_0(X/S,\Lambda(0))\rightarrow\rHbm_0(S/S,\Lambda(0))$.
 On the other hand, we have the following commutative diagram by \cite[Proposition~3.11]{J}:
 \begin{equation*}
 \xymatrix@C=40pt{
  \Lambda\ar@{-}[r]^-{\sim}\ar[d]_{n\cdot}&
  \mr{CH}_{d}(X;\Lambda)\ar[r]^-{\gamma_X}_-{\sim}\ar[d]_{f_*}&
  \rHbm_{2d}(X,\Lambda(d))\ar[d]^{f_*}\ar@{-}[r]^-{\sim}&
  \rHbm_0(X/S,\Lambda(0))\ar[d]^{f_*}\\
  \Lambda\ar@{-}[r]^-{\sim}&
 \mr{CH}_{d}(S;\Lambda)\ar[r]^-{\gamma_S}_-{\sim}&
  \rHbm_{2d}(S,\Lambda(d))\ar@{-}[r]^-{\sim}&
  \rHbm_0(S/S,\Lambda(0)).
  }
 \end{equation*}
 This implies that, since $\Lambda=\mb{Z}[1/p]$ is torsion free, the left vertical map is injective, and so is the right vertical map.
 Thus $\mr{Tr}^{\tau}_f$ is characterized by the property that $f_*\mr{Tr}^{\tau}_f=n\cdot\mbf{1}$,
 and it suffices to check that $f_*t_f=n\cdot\mbf{1}$.
 By definition, $\gamma_{X}([X])=t_f$, and the commutative diagram again implies that $f_*t_f=n\cdot\mbf{1}$.
 Thus $t_f=\mr{Tr}^{\tau}_f$ in this case.

 Consider the case where $S=\mr{Spec}(k)$ .
 We may assume that $X$ is integral, and we may shrink $X$ by its open dense subscheme since $\rHbm_{2d_X}(X,\Lambda(d_X))$ does not change by \S\ref{constmapsmo}.
 Then we may assume that $f$ can be factored into $X\xrightarrow{g}\mb{A}^{d_f}\rightarrow\mr{Spec}(k)$ where the first morphism is \'{e}tale.
 By shrinking $X$ further, we may assume we have the factorization $X\xrightarrow{g'}V\hookrightarrow\mb{A}^{d_f}$ of $g$
 where $g'$ is finite \'{e}tale.
 Since the trace map is assumed to preserve $\mb{A}^1$-orientation, we must have $\mr{Tr}^{\tau}_p=t_p$
 where $p\colon\mb{A}^1\rightarrow\mr{Spec}(k)$ by \cite[Proposition~3.12]{J}.
 Thus, by Lemma \ref{bccommsim}-\eqref{bccommsim-1}, we have $\mr{Tr}^{\tau}_f=t_f$.

 Finally, let us treat the general case.
 Let $U\subset X$ be an open dense subscheme such that $U_{\mr{red}}$ is smooth over $k$.
 Let $e$ be the dimension of $S$.
 We have an isomorphism $F\colon\rHbm_{2d}(X/S,\Lambda(d))\simeq\rHbm_{2(d+e)}(U,\Lambda(d+e))$, again, by \S\ref{constmapsmo}.
 By construction, this morphism sends $x$ to $\overline{\eta}_{S}\bullet x$.
 In view of Lemma \ref{twodefcoin}, this is equal to $t_S\bullet x$.
 Now, we have
 \begin{align*}
  F(\mr{Tr}^{\tau}_f)=\mr{Tr}^{\tau}_{f|_U}\bullet t_S=
  \mr{Tr}^{\tau}_{f|_U}\bullet\mr{Tr}^{\tau}_S=\mr{Tr}^{\tau}_U
  =t_U=t_{f|_U}\bullet t_S=F(t_f)
 \end{align*}
 where the $2^\text{nd}$ equality follows by what we have already proven, the $3^\text{rd}$ by
 the transitivity of the trace map, the $4^\text{th}$ by what we have already proven,
 and the $5^\text{th}$ by Lemma~\ref{bccommsim}-\eqref{bccommsim-1}. Thus, we conclude the
 proof.
\end{proof}

\section{Construction of the trace map}
\label{pfmthm}
In this section, we prove the main result.

\subsection{}
Let $f\colon X\rightarrow S$ be a morphism.
To a morphism $T\rightarrow S$, we associate
\begin{equation*}
 \ulrHbm(X/S,n)(T):=\Hom_{\mc{D}_S}\bigl(\Hc(X/S,\Lambda_S(n)),\H^*(T/S,\Lambda_S)\bigr)
  \simeq\rHbm(X_T/T,n),
\end{equation*}
which defines a presheaf of spectra $\ulrHbm(X/S,n)$ on $\mr{Sch}_{/S}$.
We denote by $\ulrHbm_{m}(X/S,n)$ the Abelian presheaf $\pi_m\ulrHbm(X/S,n)$ on $\mr{Sch}_{/S}$.
Here, $\pi_m$ is taken as a presheaf and do not consider any topology.

\begin{lem}\label{sHbmprop}
\leavevmode
 \begin{enumerate}
  \item\label{sHbmprop-1}
       The spectra-valued presheaf $\ulrHbm(X/S,n)$ on $\mr{Sch}_{/S}$ is a spectra-valued sheaf on the $\infty$-topos $\Shv(\mr{Sch}_{/S,p\mr{dh}})^{\wedge}$,
       where $\mr{Sch}_{/S,p\mr{dh}}$ denotes the $p$dh-site and $(\cdot)^{\wedge}$ denotes the hypercompletion.
       
  \item\label{sHbmprop-2}
       Let us assume that $\dim(f)\leq d$. We the have $\Gamma\bigl(T,\widetilde{\pi}_{2d}\ulrHbm(X/S,d)\bigr)\cong\pi_{2d}\rHbm(X_T/T,d)$,
       and $\pi_{i}\ulrHbm(X/S,d)=0$ for $i>2d$.
       Here, $\widetilde{\pi}_n$ is the functor $\pi_n$ in the $\infty$-topos $\Shv(S_{p\mr{dh}})^{\wedge}$,
       in other words, the $p$dh-sheafification of $\pi_n$.
 \end{enumerate} 
\end{lem}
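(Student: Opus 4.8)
The plan is to deduce both assertions from the $p$dh-descent of the motivic categories established in \S\ref{pdfdes}, together with the vanishing of Borel--Moore homology above degree $2d$ proved in \S\ref{vanishprop}; part~\eqref{sHbmprop-1} is pure descent bookkeeping, and part~\eqref{sHbmprop-2} then follows by feeding that vanishing into the Postnikov tower of the sheaf produced by part~\eqref{sHbmprop-1}.

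For~\eqref{sHbmprop-1}, write $g_T\colon T\to S$ for the structure morphism, so that $\H^*(T/S,\Lambda_S)=g_{T*}\Lambda_T$, and fix a $p$dh-hypercovering $\rho_\bullet\colon T_\bullet\to T$ in $\mr{Sch}_{/S}$. Since $g_{T_i}=g_T\circ\rho_i$, we have $\H^*(T_i/S,\Lambda_S)=g_{T*}\bigl(\rho_{i*}\rho_i^*\Lambda_T\bigr)$, so applying the limit-preserving functor $g_{T*}$ to the equivalence $\Lambda_T\xrightarrow{\ \sim\ }\invlim_{i\in\mbf\Delta}\rho_{i*}\rho_i^*\Lambda_T$ of \S\ref{pdfdes} (this is the only place the hypothesis $p^{-1}\in\Lambda$, valid since $\Lambda=\mb Z[1/p]$, is used) produces an equivalence $\H^*(T/S,\Lambda_S)\xrightarrow{\ \sim\ }\invlim_{i\in\mbf\Delta}\H^*(T_i/S,\Lambda_S)$ in $\mc D_S$. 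Applying the limit-preserving functor $\Hom_{\mc D_S}\bigl(\Hc(X/S,\Lambda_S(n)),-\bigr)$ turns this into an equivalence $\ulrHbm(X/S,n)(T)\xrightarrow{\ \sim\ }\invlim_{i\in\mbf\Delta}\ulrHbm(X/S,n)(T_i)$. As this holds for every $p$dh-hypercovering, $\ulrHbm(X/S,n)$ satisfies $p$dh-hyperdescent, i.e.\ it is a sheaf of spectra on $\Shv(\mr{Sch}_{/S,p\mr{dh}})^{\wedge}$.

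For~\eqref{sHbmprop-2}, I would begin with the dimension bound: the fibre of $X_T\to T$ over a point $t\in T$ over $s\in S$ is obtained from the fibre of $f$ over $s$ by the field extension $\kappa(s)\subset\kappa(t)$, hence has dimension $\dim\bigl(f^{-1}(s)\bigr)\le\dim(f)\le d$; thus $\dim(X_T/T)\le d$ and \S\ref{vanishprop} gives $\pi_i\rHbm(X_T/T,d)=0$ for every $i>2d$ (case~(1) of \emph{loc.\ cit.} when $\dim(X_T/T)<d$, case~(2) when $\dim(X_T/T)=d$). Since this says exactly that the presheaf $\pi_i\ulrHbm(X/S,d)$ is zero for $i>2d$, the second assertion is done. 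For the first, set $\mc E:=\ulrHbm(X/S,d)$; because $\pi_i\mc E=0$ for $i>2d$ \emph{as a presheaf}, there is a fibre sequence of presheaves of spectra $\Sigma^{2d}\pi_{2d}\mc E\to\mc E\to\tau_{\le 2d-1}\mc E$ with $\pi_{2d}$ and $\tau_{\le 2d-1}$ computed pointwise. I would then apply the exact functor of hypercomplete $p$dh-sheafification and invoke part~\eqref{sHbmprop-1}, which identifies the sheafification of $\mc E$ with $\mc E$, to obtain a fibre sequence of sheaves of spectra $\Sigma^{2d}\widetilde\pi_{2d}\mc E\to\mc E\to\mc F$ in which $\mc F$ is the sheafification of $\tau_{\le 2d-1}\mc E$, hence $\widetilde\pi_i\mc F=0$ for all $i\ge 2d$. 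Therefore $\mc F$ is a $(2d-1)$-truncated sheaf of spectra, so $\mc F(T)$ is a $(2d-1)$-truncated spectrum; evaluating the fibre sequence at $T$, and noting that the value of $\Sigma^{2d}\widetilde\pi_{2d}\mc E$ at $T$ is $\Sigma^{2d}\Gamma(T,\widetilde\pi_{2d}\mc E)$, the homotopy long exact sequence around degree $2d$ reads
\[
 \pi_{2d+1}\mc F(T)\longrightarrow\Gamma\bigl(T,\widetilde\pi_{2d}\mc E\bigr)\longrightarrow\pi_{2d}\mc E(T)\longrightarrow\pi_{2d}\mc F(T),
\]
and its outer terms vanish, so $\Gamma\bigl(T,\widetilde\pi_{2d}\ulrHbm(X/S,d)\bigr)\cong\pi_{2d}\mc E(T)=\pi_{2d}\rHbm(X_T/T,d)$, as desired.

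The one step I expect to require care is the last one in part~\eqref{sHbmprop-2}: it relies on the facts that hypercomplete sheafification of the pointwise truncation $\tau_{\le 2d-1}\mc E$ introduces no homotopy in degrees $\ge 2d$, and that the sections functor $\mc G\mapsto\mc G(T)$ carries truncated sheaves of spectra to truncated spectra. Both are standard properties of the $t$-structure on the hypercomplete $\infty$-topos of sheaves of spectra, and they are exactly what converts the presheaf-level vanishing of \S\ref{vanishprop} into the sheaf-theoretic statement about $\widetilde\pi_{2d}$.
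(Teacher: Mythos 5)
Your proof is correct, and part~\eqref{sHbmprop-1} is the paper's own argument verbatim: apply the $p$dh-descent Lemma of \S\ref{pdfdes} to $\Lambda_T$, push forward along the limit-preserving direct image to $S$, and use that $\Hom_{\mc{D}_S}\bigl(\Hc(X/S,\Lambda_S(n)),-\bigr)$ preserves limits. For part~\eqref{sHbmprop-2} you take a genuinely different (heavier) route. After the same dimension bound and appeal to Proposition~\ref{vanishprop}, the paper argues directly: since the presheaf homotopy groups $\pi_i\ulrHbm(X/S,d)$ vanish for $i>2d$, the hyperdescent equivalence of part~\eqref{sHbmprop-1} together with the left exactness of $\invlim$ shows that the top presheaf $\pi_{2d}\ulrHbm(X/S,d)$ already satisfies the $p$dh-sheaf condition, so $\widetilde{\pi}_{2d}$ coincides with the presheaf $\pi_{2d}$ and the identification of sections over $T$ is immediate. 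You instead run the Postnikov fibre sequence $\Sigma^{2d}\pi_{2d}\mc{E}\rightarrow\mc{E}\rightarrow\tau_{\le 2d-1}\mc{E}$ through the (exact, since everything is stable and sheafification is a left adjoint) hypercomplete sheafification, use that homotopy sheaves of the hypercompletion are sheafified homotopy presheaves, and then use left t-exactness of sections on the truncated piece; this is correct and is the argument one would need if information below the top degree were wanted, but here it buys nothing over the paper's one-line use of left exactness of limits. One imprecision to fix: the value of the sheafified Eilenberg--MacLane term at $T$ is not $\Sigma^{2d}\Gamma\bigl(T,\widetilde{\pi}_{2d}\mc{E}\bigr)$ as a spectrum---its homotopy in degrees below $2d$ computes the higher $p$dh-cohomology of $\widetilde{\pi}_{2d}\mc{E}$---but only its $\pi_{2d}$, which is indeed $\Gamma\bigl(T,\widetilde{\pi}_{2d}\mc{E}\bigr)$, enters your four-term exact sequence, so the conclusion is unaffected.
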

\begin{proof}
 Let us show the claim \eqref{sHbmprop-1} of the lemma.
 Let $T_\bullet\rightarrow T$ be a $p$dh-hypercovering of $q\colon T\rightarrow S\in\mr{Sch}_{/S}$.
 We must show that the canonical map
 \begin{equation*}
  \ulrHbm(X/S,n)(T)\rightarrow
   \invlim_{i\in\mbf{\Delta}}\ulrHbm(X/S,n)(T_i)
 \end{equation*}
 is an equivalence in $\mc{D}_T$.
 By Lemma \ref{pdfdes} applied to $\mc{F}=\Lambda_T\in\mc{D}_T$,
 we have the equivalence $\Lambda_T\xrightarrow{\sim}\invlim_{i\in\mbf{\Delta}}\H^*(T_i/T,\Lambda_T)$.
 By applying $q_*$, taking into account that $q_*$ commutes with arbitrary limit by the existence of a left adjoint, we have an equivalence
 $\H^*(T/S,\Lambda_S)\xrightarrow{\sim}\invlim_{i\in\mbf{\Delta}}\H^*(T_i/S,\Lambda_S)$.
 Thus, the claim follows by definition.
%
%
 Let us show the claim \eqref{sHbmprop-2} of the lemma.
 The Abelian sheaf $\widetilde{\pi}_i\ulrHbm(X/S,d)$ is the $p$dh-sheafification of the Abelian presheaf associating $\pi_i\ulrHbm(X/S,d)(T)$ to $T\in\mr{Sch}_{/S}$.
 Since $\pi_i\ulrHbm(X/S,d)(T)\cong\rHbm_i(X_T/T,d)$, this vanishes if $i>2d$ by Proposition \ref{vanishprop}.
 Furthermore, since $\invlim$ is left exact, \ref{sHbmprop-1} and the vanishing for $i>2d$ imply that $\pi_{2d}\ulrHbm(X/S,d)$ is already a $p$dh-sheaf on $\mr{Sch}_{/S}$,
 and the claim follows.
\end{proof}

\subsection{}
Let $X\rightarrow S$ be a morphism. Let us recall the Abelian group $\mr{Hilb}(X/S,r)$ for an integer $r\geq0$ from \cite[\S3.2]{SV}.
This is the set of closed subschemes in $X$ which are flat over $S$.
We denote by $\Lambda\mr{Hilb}(X/S,r)$ the free $\Lambda$-module generated by $\mr{Hilb}(X/S,r)$.

Now, assume that $S$ is smooth.
For a (flat) morphism $g\colon Z\rightarrow S$ in $\mr{Hilb}(X/S,r)$, we constructed $t_g\in\rHbm_{2r}(Z/S,\Lambda(r))$ in \S\ref{constmapsmo} when $S$ is equidimensional.
Even if $S$ is not equidimensional, by considering componentwise, we define the element $t_g$.
By associating to $Z$ the image of $t_g$ via the map $\rHbm_{2r}(Z/S,\Lambda(r))\rightarrow\rHbm_{2r}(X/S,\Lambda(r))$,
we have the map
$\mr{Hilb}(X/S,r)\rightarrow\rHbm_{2r}(X/S,\Lambda(r))$.
This yields the map
$\Lambda\mr{Hilb}(X/S,r)\rightarrow\rHbm_{2r}(X/S,\Lambda(r))$.
Now, let $I_{X/S}\subset\Lambda\mr{Hilb}(X/S,r)$ be the submodule
consisting of elements $\sum \lambda_iZ_i\in\Lambda\mr{Hilb}(X/S,r)$
such that the associated cycle $\sum \lambda_i[Z_i]=0$
(\emph{cf.}~the paragraph before Theorem~4.2.11 in~\cite{SV}).\footnote{
In \cite[\S2.1]{K}, Kelly pointed out a problem in the definition of the
map $\mr{cycl}$ of \cite{SV} used in the definition of $I_{X/S}$ above.
Note that we may employ Kelly's definition of $\mr{cycl}$ to define
$I_{X/S}$, but we get the same ideal, and it does not affect our
arguments.}
Since $t_g$ only depends on the underlying subset and its length, the above constructed map factors through $I$,
and defines a map
\begin{equation*}
 T(X/S,r)\colon
 \Lambda\mr{Hilb}(X/S,r)/I_{X/S}
  \rightarrow
  \rHbm_{2r}(X/S,\Lambda(r))
\end{equation*}

\begin{lem}
 \label{compat}
 Let $h\colon T\rightarrow S$ be a morphism between smooth
 $k$-schemes. Then we have the following commutative diagram of Abelian
 groups
 \begin{equation*}
  \xymatrix@C=50pt{
   \Lambda\mr{Hilb}(X/S,r)\ar[r]^-{T(X/S,r)}
   \ar[d]_{h^*}&
   \rHbm_{2r}(X/S,\Lambda(r))\ar[d]^{h^*}\\
  \Lambda\mr{Hilb}(X_T/T,r)\ar[r]^-{T(X_T/T,r)}&
   \rHbm_{2r}(X_T/T,\Lambda(r))
   }
 \end{equation*}
\end{lem}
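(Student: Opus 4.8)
The plan is to check the commutativity on generators, that is, for a single closed subscheme $Z\subset X$ flat over $S$ of relative dimension $r$. Writing $g\colon Z\rightarrow S$ for the structural morphism and $g_T\colon Z_T\rightarrow T$ for its base change, the flatness of $g$ guarantees that $g_T$ is again flat of relative dimension $r$, so that $Z_T$ is a well-defined element of $\mr{Hilb}(X_T/T,r)$; moreover $h^*$ on the Hilbert group sends $[Z]$ to $[Z_T]$ by construction in \cite[\S3.2]{SV}. Thus the outer square reduces to the claim that the pullback map $h^*\colon\rHbm_{2r}(X/S,\Lambda(r))\rightarrow\rHbm_{2r}(X_T/T,\Lambda(r))$ sends the image of $t_g$ to the image of $t_{g_T}$. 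Since the maps $\rHbm_{2r}(Z/S,\Lambda(r))\rightarrow\rHbm_{2r}(X/S,\Lambda(r))$ and $\rHbm_{2r}(Z_T/T,\Lambda(r))\rightarrow\rHbm_{2r}(X_T/T,\Lambda(r))$ are part of the pushforward functoriality along the (proper, being a closed immersion) inclusions $Z\hookrightarrow X$ and $Z_T\hookrightarrow X_T$, and base change commutes with proper pushforward in the six functor formalism, it suffices to prove the equality $h^*(t_g)=t_{g_T}$ in $\rHbm_{2r}(Z_T/T,\Lambda(r))$.

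This last equality is exactly Lemma \ref{bccommsim}-\eqref{bccommsim-2}: we apply it to the Cartesian square obtained by base changing $g\colon Z\rightarrow S$ along $h\colon T\rightarrow S$, noting that $S$ and $T$ are smooth and equidimensional (decomposing into connected components if necessary, as is already done implicitly in the definition of $t_g$ for non-equidimensional $S$) and that $g$ is flat. That lemma gives precisely $h^*(t_g)=t_{g_T}$, which is what we need.

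Finally, I would check that the argument is compatible with passing to the quotient by $I_{X/S}$, but this is automatic: the diagram in the statement is already written at the level of $\Lambda\mr{Hilb}(X/S,r)$ rather than the quotient, so once commutativity holds on generators it holds on all of $\Lambda\mr{Hilb}(X/S,r)$ by $\Lambda$-linearity of all four maps. The only genuine point requiring care—and the main (mild) obstacle—is bookkeeping the identification of the pullback of the closed subscheme $Z$ with the corresponding element of $\mr{Hilb}(X_T/T,r)$ together with the compatibility of proper pushforward with base change; once these are in place the result is a direct consequence of Lemma \ref{bccommsim}-\eqref{bccommsim-2}. Since every ingredient is either definitional or already established, the proof is short.
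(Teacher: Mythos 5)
Your argument is correct and is essentially the paper's proof, which simply cites Lemma \ref{bccommsim}-\eqref{bccommsim-2}; you have just made explicit the routine reductions (checking on generators $Z\in\mr{Hilb}(X/S,r)$, the identification $h^*[Z]=[Z_T]$, and the compatibility of proper pushforward with bivariant pullback) that the paper leaves implicit.
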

\begin{proof}
 This follows immediately from Lemma \ref{bccommsim}-\eqref{bccommsim-2}.
\end{proof}

\subsection{}
Let $f\colon X\rightarrow S$ be a morphism.
Let $\mc{Z}(X/S,r)$ be the presheaf of Abelian groups on $\mr{Sch}_{/S}$ which sends $T$ to $\Lambda\mr{Hilb}(X_T/T,r)/I_{X_T/T}$,
and $\ul{z}(X/S,r)$ be the presheaf which sends $T$ to $z(X_T/T,r)$.
Consider the (geometric) morphism of sites $\mr{Sch}_{/S,p\mr{dh}}\xrightarrow{a}\mr{Sch}_{/S,\mr{cdh}}\xrightarrow{b}\mr{Sch}_{/S}$. Then we have
\begin{equation}
 \label{sheafifizH}
 (b\circ a)^*\bigl(\mc{Z}(X/S,r)\bigr)
  \cong
  a^*\bigl(b^*\mc{Z}(X/S,r)\bigr)
  \cong
  a^*\ul{z}(X/S,\Lambda(r))
  \cong
  \ul{z}(X/S,\Lambda(r)),
\end{equation}
where the $2^\text{nd}$ isomorphism follows by \cite[Theorem~4.2.11]{SV},
the last isomorphism follows since $z(X/S,\Lambda(r))$ is an h-sheaf by \cite[Theorem~4.2.2]{SV} and, in particular, a $p$dh-sheaf.

Now, a $p$dh-hypercovering $S_\bullet\rightarrow S$ is said to be {\em good} if $S_i$ is smooth for any $i$.
Let $\mr{HR}(S)$ be the (ordinary) category of $p$dh-hypercoverings of $S$ (\emph{cf.}~\cite[Expos\'e~V, \S7.3.1]{SGA}).
Denote by $\mr{HR}^{\mr{g}}(S)$ the full subcategory of $\mr{HR}(S)$ consisting of good $p$dh-covers.
Recall that $\mr{HR}(S)^{\mr{op}}$ is filtered (\emph{cf.}~\cite[Expos\'e~V, Th\'eor\`eme~7.3.2]{SGA}).
For any $S_\bullet\in\mr{HR}(S)$, we can take $S'_\bullet\in\mr{HR}^{\mr{g}}(S)$ and a morphism $S'_\bullet\rightarrow S_\bullet$ by
\cite[Expos\'e~$\mr{V}^{\mr{bis}}$, Proposition~5.1.3]{SGA} and \ref{GTres},
which implies that $\mr{HR}^{\mr{g}}(S)^{\mr{op}}$ is cofinal in $\mr{HR}(S)^{\mr{op}}$  (\emph{cf.}~\cite[Expos\'e~I, Proposition~8.1.3]{SGA}).
Put $X_\bullet:=X\times_S S_\bullet$.
Thus we have the isomorphisms
\begin{align*}
 z(X/S,\Lambda(r))
 \cong
 \ul{z}(X/S,\Lambda(r))(S)
 &\xrightarrow{\ \sim\ }
 \indlim_{S_\bullet\in\mr{HR}(S)^{\mr{op}}}
 \invlim_{i\in\mbf{\Delta}}\mc{Z}(X/S,r)(S_i)\\
 &\xleftarrow{\ \sim\ }
 \indlim_{S_\bullet\in\mr{HR}^{\mr{g}}(S)^{\mr{op}}}
 \invlim_{i\in\mbf{\Delta}}
 \Lambda\mr{Hilb}(X_i/S_i,r)/I,
\end{align*}
where the $2^\text{nd}$ isomorphism holds by \cite[Expos\'e~V, Th\'eor\`eme~7.4.1]{SGA} and \eqref{sheafifizH}.

\medskip
Let $\mr{Ab}_{\mbf{\Delta}}$ be the category of simplicial Abelian groups.
Consider the functors
\begin{equation*}
 \mc{Z}(-,r),\,\rHbm_{2r}(-,\Lambda(r))
  \colon
  \mr{HR}^{\mr{g}}(S)^{\mr{op}}\rightarrow
  \mr{Ab}_{\mbf{\Delta}},
\end{equation*}
defined by sending $S_{\bullet}$ to $\mc{Z}(X_\bullet/S_\bullet,r)$ and $\rHbm_{2r}(X_\bullet/S_\bullet,\Lambda(r))$ respectively.
By Lemma \ref{compat}, we have the map of functors $\mc{Z}(-,r)\rightarrow\rHbm_{2r}(-,\Lambda(r))$.
Now, assume:
\begin{quote}
 $\dim(f^{-1}(s))\leq d$ for any $s\in S$.
\end{quote}
By Lemma \ref{sHbmprop}-\eqref{sHbmprop-2}, we also have the descent isomorphism
$\rHbm_{2d}(X/S,d)\xrightarrow{\sim}\invlim_{i\in\mbf{\Delta}}\rHbm_{2d}(X_i/S_i,d)$ of Abelian groups.
Combining everything together, we have a map
\begin{align*}
 \mr{tr}_f\colon z(X/S,d)&\cong
 \indlim_{S_\bullet\in\mr{HR}^{\mr{g}}(S)}
 \invlim_{i\in\mbf{\Delta}}\mc{Z}(-,d)\\
 &\rightarrow
 \indlim_{S_\bullet\in\mr{HR}^{\mr{g}}(S)}
 \invlim_{i\in\mbf{\Delta}}\rHbm_{2d}(-,d)
 \cong
 \rHbm_{2d}(X/S,d).
\end{align*}

\begin{lem}\label{functtrnai}
\leavevmode
 \begin{enumerate}
  \item\label{functtrnai-1}
       Consider the Cartesian diagram {\normalfont(\ref{stdcartdia})}.
       Assume $\dim(f^{-1}(y))\leq d$ for any point $y$ of $Y$, in
       which case the same property holds for $f'$. Then we have
       $g^*\circ\mr{tr}_f=\mr{tr}_{f'}\circ g'^*$.

  \item\label{functtrnai-3}
       Let $X\xrightarrow{g}X'\xrightarrow{f'}Y$ be morphisms and put $f:=f'\circ g$.
       We assume that for any $y\in Y$, $\dim(f^{(\prime)-1}(y))\leq d$ and $g$ is proper.
       Then we have $\mr{tr}_{f'}\circ g_*=g_*\circ\mr{tr}_{f}$.
 \end{enumerate}
\end{lem}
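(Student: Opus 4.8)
The plan is to check both identities after unwinding the construction of $\mr{tr}$ recalled just above, reducing to the compatibilities already available over smooth bases: Lemma~\ref{compat} for \eqref{functtrnai-1} and Lemma~\ref{bccommsim}-\eqref{bccommsim-3} for \eqref{functtrnai-3}. Recall that $\mr{tr}_f$ is obtained by choosing a good $p$dh-hypercovering $S_\bullet\rightarrow S$, writing an element of $z(X/S,d)$ as (the class of) a compatible family $\eta=(\eta_i)_i\in\invlim_i\mc{Z}(X_i/S_i,d)$, applying the levelwise maps $T(X_i/S_i,d)$, and transporting the result through the descent isomorphism $\invlim_i\rHbm_{2d}(X_i/S_i,d)\cong\rHbm_{2d}(X/S,d)$ furnished by Lemma~\ref{sHbmprop}-\eqref{sHbmprop-2}.

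For \eqref{functtrnai-1} (note that $f'$ inherits the bound on fibre dimensions, so $\mr{tr}_{f'}$ is defined): fix a good $p$dh-hypercovering $Y_\bullet\rightarrow Y$. Its base change $Y_\bullet\times_Y Y'$ is a $p$dh-hypercovering of $Y'$, and by \ref{GTres} we may choose a good $p$dh-hypercovering $Y'_\bullet\rightarrow Y'$ together with a morphism $Y'_\bullet\rightarrow Y_\bullet\times_Y Y'$ over $Y'$; composing, we get a morphism $Y'_\bullet\rightarrow Y_\bullet$ lying over $g$ which at each level is a morphism $h_i\colon Y'_i\rightarrow Y_i$ of smooth $k$-schemes. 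Compute $\mr{tr}_f$ with $Y_\bullet$ and $\mr{tr}_{f'}$ with $Y'_\bullet$. Under these presentations, $g'^*$ on cycle groups is induced by the restriction maps $\mc{Z}(X_i/Y_i,d)\rightarrow\mc{Z}(X'_i/Y'_i,d)$ along $h_i$, since $\ul{z}$ is a $p$dh-sheaf with functorial restriction (\emph{cf.}~\eqref{sheafifizH}); and the bivariant pullback $g^*$ on $\rHbm_{2d}$ is induced by the levelwise pullbacks $\rHbm_{2d}(X_i/Y_i,d)\rightarrow\rHbm_{2d}(X'_i/Y'_i,d)$, by functoriality of the presheaf $\ulrHbm$ (using that $\Hc$ commutes with arbitrary base change) together with the descent isomorphism. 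Since for each $i$ the square relating $T(X_i/Y_i,d)$, $T(X'_i/Y'_i,d)$ and the pullbacks $h_i^*$ commutes by Lemma~\ref{compat}, passing to $\invlim_i$ and then to the colimit over good hypercoverings yields $g^*\circ\mr{tr}_f=\mr{tr}_{f'}\circ g'^*$.

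For \eqref{functtrnai-3}: here $X$ and $X'$ lie over the same base $Y$, so a single good $p$dh-hypercovering $Y_\bullet\rightarrow Y$ computes both $\mr{tr}_f$ and $\mr{tr}_{f'}$, with $g_\bullet\colon X_\bullet:=X\times_Y Y_\bullet\rightarrow X'\times_Y Y_\bullet=:X'_\bullet$ proper over $Y_\bullet$. It therefore suffices to check, at each level $i$ (so that $Y_i$ is smooth), that proper pushforward of cycles followed by $\mr{tr}_{f'_i}$ agrees with $T(X_i/Y_i,d)$ followed by proper pushforward $(g_i)_*$ on $\rHbm_{2d}$. As $\mc{Z}(X_i/Y_i,d)$ is generated modulo $I$ by the classes $[Z]$ of closed subschemes $Z\subset X_i$ flat over $Y_i$, on which $T$ takes the value (the image of) $t_{Z/Y_i}$, and as cycle-pushforward sends $[Z]$ to $\deg(Z/Z')\cdot[Z']$ with $Z':=\overline{g_i(Z)}$ when $\dim Z'=\dim Z$ and to $0$ otherwise, this follows from Lemma~\ref{bccommsim}-\eqref{bccommsim-3} applied (componentwise) to the proper morphism $Z\rightarrow Z'$ over the smooth scheme $Y_i$, which computes $(g_i|_Z)_*(t_{Z/Y_i})$, together with the identification --- via the isomorphisms $\gamma$ of \S\ref{constmapsmo} and the description of $\mr{tr}$ on the flat cycles generating $\ul{z}$ $p$dh-locally --- of $\mr{tr}_{f'_i}$ applied to the cycle $\deg(Z/Z')\cdot[Z']$ (resp.~$0$) with the image of $\deg(Z/Z')\cdot t_{Z'/Y_i}$ (resp.~$0$). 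Taking $\invlim_i$ and the colimit over good hypercoverings concludes.

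The principal difficulty is the bookkeeping with hypercoverings in both parts: for \eqref{functtrnai-1} one must make sure a good refinement of the base-changed hypercovering still computes $\mr{tr}_{f'}$ and that the two pullbacks (on $\ul{z}$ and on $\ulrHbm$) are induced, compatibly, by the levelwise restriction maps; for \eqref{functtrnai-3} the subtle point is that proper pushforward does not preserve flatness, so the comparison cannot be carried out on Hilbert schemes directly but only after passing to cycles, which is exactly why the relevant input is Lemma~\ref{bccommsim}-\eqref{bccommsim-3} rather than a statement about $\mr{Hilb}$, and one must still check that $\mr{tr}_{f'}$ evaluated on the (possibly non-flat) pushforward cycle recovers the expected class built from $t$.
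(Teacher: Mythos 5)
Your argument is essentially the paper's: for part~\eqref{functtrnai-1} you perform exactly the same hypercovering bookkeeping (refine the base-changed good $p$dh-hypercovering to a good one mapping to $Y_\bullet$, apply Lemma~\ref{compat}, i.e.\ Lemma~\ref{bccommsim}-\eqref{bccommsim-2}, levelwise, then pass to the limits and the colimit over hypercoverings), and part~\eqref{functtrnai-3} rests on the same key input, Lemma~\ref{bccommsim}-\eqref{bccommsim-3}. The paper in fact disposes of~\eqref{functtrnai-3} with a one-line citation of that lemma, so your additional discussion of flat generators and of evaluating $\mr{tr}$ on the (possibly non-flat) pushforward cycle only spells out details the paper leaves implicit.
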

\begin{proof}
 Let us check the claim \eqref{functtrnai-1} of the lemma.
 Take a good $p$dh
 hypercovering $\alpha\colon Y_\bullet\rightarrow Y$. Then we are able to
 find a good $p$dh-hypercovering $\alpha'\colon Y'_\bullet\rightarrow Y'$
 which fits into the following diagram, not necessarily Cartesian:
 \begin{equation*}
  \xymatrix{
   Y'_\bullet\ar[r]\ar[d]_{\alpha'}&Y_\bullet\ar[d]^{\alpha}\\
  Y'\ar[r]&Y.
   }
 \end{equation*}
 Consider the following diagram:
 \begin{equation*}
  \xymatrix{
   z(X/Y,d)\ar[r]^-{\sim}\ar[d]&
   \invlim_{i\in\mbf{\Delta}}z(X_{Y_i}/Y_i,d)\ar[r]^-{\mr{Tr}}\ar[d]&
   \invlim_{i\in\mbf{\Delta}}\rHbm_{2d}(X_{Y_i}/Y_i,d)\ar[d]&
   \rHbm_{2d}(X/Y,d)\ar[l]_-{\sim}\ar[d]\\
  z(X'/Y',d)\ar[r]^-{\sim}&
   \invlim_{i\in\mbf{\Delta}}z(X'_{Y'_i}/Y'_i,d)\ar[r]^-{\mr{Tr}}&
   \invlim_{i\in\mbf{\Delta}}\rHbm_{2d}(X'_{Y'_i}/Y'_i,d)&
   \rHbm_{2d}(X'/Y',d).\ar[l]_-{\sim}
   }
 \end{equation*}
Both external squares are commutative by the functoriality of $z(-,d)$
 and $\rHbm(-,d)$, and the middle as well by
 \ref{bccommsim}-\eqref{bccommsim-2}.
 The claim \eqref{functtrnai-3} follows immediately from Lemma \ref{bccommsim}-\eqref{bccommsim-3}.
\end{proof}

\subsection{Proof of Theorem \ref{mainres}}\mbox{}\\
First, let us construct a morphism $z_{\mr{equi}}(-,d)\rightarrow\rHbm_{2d}(-,d)$.
Let $f\colon Y\rightarrow T$ be a morphism, and $w\in z_{\mr{equi}}(Y/T,d)$.
Let $W$ be the support of $w$, and $i\colon W\hookrightarrow Y$ be the closed immersion.
Then $w$ is the image of an element $w'\in z_{\mr{equi}}(W/T,d)$ via the morphism $i^z_*\colon z_{\mr{equi}}(W/T,d)\rightarrow z_{\mr{equi}}(Y/T,d)$.
Since $w\in z_{\mr{equi}}(Y/T,d)$, the dimension of each fiber of $f\circ i$ is $\leq d$.
Thus, we have already constructed the morphism $\mr{tr}_{f\circ i}\colon z_{\mr{equi}}(W/T,d)\rightarrow\rHbm_{2d}(W/T,d)$.
We define $\tau_{Y/T}(w):=i^{\mr{H}}_*\circ\mr{tr}_{f\circ i}(w')$,
where $i^{\mr{H}}_*\colon\rHbm_{2d}(W/T,d)\rightarrow\rHbm_{2d}(Y/T,d)$ is the pushforward.
This defines a map $\tau_{Y/Z}\colon z_{\mr{equi}}(Y/T,d)\rightarrow\rHbm_{2d}(Y/T,d)$.
In view of Lemma \ref{functtrnai}-\eqref{functtrnai-3}, this map is in fact a homomorphism of Abelian groups.
This map is compatible with base change and pushforward by Lemmas~\ref{functtrnai}-\eqref{functtrnai-1} and~\ref{functtrnai}-\eqref{functtrnai-3}.
The uniqueness of the map follows by Lemma~\ref{smcomp} and construction.

It remains to show the compatibility with respect to the product structure.
Let $X\xrightarrow{f}Y\xrightarrow{g}Z$ be morphisms, and $x\in z_{\mr{equi}}(X/Y,d)$, $y\in z_{\mr{equi}}(Y/Z,e)$.
By definition, we may assume that $Z$ is smooth, and $\mr{Supp}(x)\subset X$ and $\mr{Supp}(y)\subset Y$ are flat over $Z$.
By projection formula of bivariant theories (\emph{cf.}~\S\ref{FMbivdfn}), we may assume that $Y=\mr{Supp}(y)$ (with reduced induced scheme structure).
Then, by the compatibility with pushforward, we may replace $X$ by $\mr{Supp}(x)$.
In this situation, we are allowed to shrink $Z$ by its open dense subscheme because $\rHbm(X/Z,d+e)$ does not change by \S\ref{constmapsmo},
we may further assume that $y=[Y]$.
Now, for an open immersion $j\colon U\subset X$, we have restriction morphisms
$z_{\mr{equi}}(X/Z,n)\rightarrow z_{\mr{equi}}(U/Z,n)$ and $\rHbm(X/Z,n)\rightarrow\rHbm(U/Z,n)$
and we may check easily that these are compatible with $\tau_{X/Z}$.
Since $f\colon X\rightarrow Y$ is dominant, we may take open dense subschemes $U\subset X$ and $V\subset Y$ such that
$f(U)\subset V$, $U\rightarrow V$ is flat, and $V$ is smooth.
The compatibility with open immersion allows us to replace $X$ by $U$.
Since $\tau_{U/Y}(x)\bullet\tau_{Y/Z}(y)=\tau_{U/V}(x)\bullet\tau_{V/Z}(y|_V)$,
it suffices to show the claim for $U\rightarrow V\rightarrow Z$,
and in this case, we have already treated in Lemma~\ref{bccommsim}-\eqref{bccommsim-1} together with Lemma~\ref{smcomp}.
\qed

\section{$\infty$-enhancement of the trace map}
\label{sec5}
In this section, we upgrade the trace map to the $\infty$-categorical setting.

\subsection{}
Let $\Arr$ be the category of morphisms $X\rightarrow S$ in $\mr{Sch}_{/k}$
whose morphisms from $Y\rightarrow T$ to $X\rightarrow S$ consists of diagrams of the form
\begin{equation}
 \label{diaarr}
 \begin{gathered}
 \xymatrix{
  Y\ar[d]&X_T\ar[r]\ar[l]_{\alpha}\ar[d]\ar@{}[rd]|\square&
  X\ar[d]\\
 T\ar@{=}[r]&T\ar[r]^-{g}&S
  }
  \end{gathered}
\end{equation}
where $\alpha$ is proper.
The composition is defined in an evident manner, and we refer to \cite[\S5.2]{A} for the detail.
We often denote an object corresponding to $X\rightarrow S$ in $\Arr$ by $X/S$.
For $Y/T\in\Arr$, let $\mr{Cov}(Y/T)$ be the set of families $\{Y_{T_i}/T_i\rightarrow Y/T\}_{i\in I}$ where $\{T_i\rightarrow T\}$ is a cdh-covering.
The category $\Arr$ does not admits pullbacks in general, but each morphism $(Y_{T_i}/T_i)\rightarrow(Y/T)$ is {\it quarrable}, in other words,
for any morphism $(Y'/T')\rightarrow(Y/T)$, the pullback $(Y_{T_i}/T_i)\times_{(Y/T)}(Y'/T')\rightarrow(Y'/T')$ exists.
Indeed, we can check easily that $(Y_{T_i}/T_i)\times_{(Y'/T')}(Y/T)\cong(Y'\times_TT_i/T'\times_TT_i)$.
Thus, this family defines a pretopology on $\Arr$ in the sense of \cite[Expos\'e~II, \S1.3]{SGA}.

Now, fixing $(Y/T)\in\Arr$, we have the functor $\iota_{Y/T}\colon\mr{Sch}_{/T}\rightarrow\Arr$ sending $T'\rightarrow T$ to $(Y\times_TT'/T')$.
This functor commutes with pullbacks.
Putting the cdh-topology on $\mr{Sch}_{/T}$, the functor $\iota_{X/T}$ is cocontinuous (\emph{cf.}~\cite[Expos\'e~III, \S2.1]{SGA}) by \cite[Expos\'e~II, \S1.4]{SGA}.

\subsection{}
By associating the Abelian group $z(Y/T,n)$ to $Y\rightarrow T$, we have a functor $z^{\mr{SV}}(n)\colon\widetilde{\mr{Ar}}^{\mr{op}}\rightarrow\Sp^{\heartsuit}$.
Then $z^{\mr{SV}}(n)$ is an Abelian sheaf on $\Arr$.
Indeed, we must show the \v{C}ech descent with respect to the elements of $\mr{Cov}(Y/T)$ by \cite[Expos\'e~II, \S2.2]{SGA}.
This is exactly the contents of \cite[\S4.2.9]{SV}.
We define $z(n)$ to be the sheafification of $z^{\mr{SV}}(n)$ regarded as a spectra-valued presheaf on $\Arr$.

Now, by \cite[Lemma~C.3]{ES}, we have the following commutative diagram of geometric morphisms of $\infty$-topoi
\begin{equation}
 \label{cocontdia}
 \begin{gathered}
 \xymatrix@C=40pt{
  \Shv(\mr{Sch}_{/T,\mr{cdh}})\ar[r]^-{\iota^{\mr{s}}_{Y/T}}\ar[d]&\Shv(\widetilde{\mr{Ar}})\ar[d]\\
 \PShv(\mr{Sch}_{/T})\ar[r]^-{\iota_{Y/T}}&\PShv(\widetilde{\mr{Ar}}).
  }
  \end{gathered}
\end{equation}
Note that, since local objects (with respect to a localization) are stable under taking limits by definition,
$(\iota^{\mr{s}}_{Y/T})^*$ is commutes with limits by \cite[Lemma~C.3]{ES},
which justifies that $\iota^{\mr{s}}_{Y/T}$ is a geometric morphism.
Moreover, by \cite[Proposition 20.6.1.3]{SAG}, the functor $(\iota^{\mr{s}}_{Y/T})^*$ is given by composing with $\iota_{Y/T}$.
In particular, $z(n)\circ\iota_{Y/T}$ is the (cdh-)sheafification of $z^{\mr{SV}}(n)\circ\iota_{Y/T}$.

\subsection{}
Assume we are given a morphism $F\colon(Y/T)\rightarrow(X/S)$ in $\Arr$ as in (\ref{diaarr}).
Then we have the morphism of spectra
\begin{equation*}
 F^*\colon\rHbm(X/S,d)[-2d]\xrightarrow{\ g^*\ }\rHbm(X_T/T,d)[-2d]\xrightarrow{\ \alpha_*\ }\rHbm(Y/T,d)[-2d].
\end{equation*}
With this morphism, we can check easily that the association $\rHbm(X/S,d)[-2d]$ to $X/S\in\Arr$ yields a functor $H\colon\Arr^{\mr{op}}\rightarrow\mr{h}\Sp$.
It is natural to expect that this morphism can be lifted to a functor of $\infty$-categories $\Arr^{\mr{op}}\rightarrow\Sp$.
We put the existence as an assumption as follows:

\begin{quote}
 Assume we are given a functor $\sHbm(d)\colon\Arr^{\mr{op}}\rightarrow\Sp$ between $\infty$-categories
 whose induced functor between homotopy categories coincides with $H$ above.
%
%
\end{quote}
We constructed such a functor in \cite[Example 6.8]{A}, and also in \cite[\S C.3]{A2} using a slightly different method.
Now, we have the following $\infty$-enhancement of the trace map.

\begin{thm}
 There exists essentially uniquely a morphism of spectra-valued sheaves $\tau^{\dag}\colon z(d)\rightarrow\sHbm(d)$
 on $\Arr$ for any $d$ such that the composition
 \begin{equation*}
  z_{\mr{equi}}(-,d)\lhook\joinrel\longrightarrow
   z(-,d)
   \cong
   \pi_0z(d)
   \xrightarrow{\ \pi_0(\tau^{\dag})\ }
   \pi_0\sHbm(d)
   \cong
   \rHbm_{2d}(-,d)
 \end{equation*}
 coincides with the morphism $\tau$ of Theorem {\normalfont\ref{mainres}}.
\end{thm}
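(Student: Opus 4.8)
The plan is to turn the statement into a computation of a mapping space via the $t$-structure, after which existence and uniqueness become formal consequences of Theorem~\ref{mainres}. \textbf{First I would check that $\sHbm(d)$ is a sheaf}, not just a presheaf, of spectra on $\Arr$. The topology on $\Arr$ is generated by the families $\{Y_{T_i}/T_i\to Y/T\}$ coming from cdh-coverings $\{T_i\to T\}$, so it is enough to verify \v{C}ech descent along these. Since $\Hc(X/S,-)$ commutes with arbitrary base change, the spectra $\rHbm(X_{T_\bullet}/T_\bullet,d)$ attached to such a \v{C}ech nerve are described, as in~\S\ref{pfmthm}, through $\H^*(T_\bullet/T,\Lambda_T)$, and Lemma~\ref{sHbmprop}-\eqref{sHbmprop-1} (hypercomplete $p\mr{dh}$-descent of $\ulrHbm(X/S,d)$, hence cdh-descent) yields exactly the required \v{C}ech descent; thus $\sHbm(d)\in\Shv(\Arr,\Sp)$.

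\textbf{Second, I would reduce to presheaves.} Because $z(d)$ is by construction the sheafification of $z^{\mr{SV}}(d)$ and sheafification is left adjoint to the inclusion $\Shv(\Arr,\Sp)\hookrightarrow\PShv(\Arr,\Sp)$, the previous step gives an equivalence
\[
 \mr{Map}_{\Shv(\Arr,\Sp)}\bigl(z(d),\sHbm(d)\bigr)\ \xrightarrow{\ \sim\ }\ \mr{Map}_{\PShv(\Arr,\Sp)}\bigl(z^{\mr{SV}}(d),\sHbm(d)\bigr),
\]
so it suffices to analyse maps of spectra-valued presheaves $z^{\mr{SV}}(d)\to\sHbm(d)$.

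\textbf{Third --- and this is the heart --- I would show the mapping space is discrete.} The presheaf $z^{\mr{SV}}(d)$ is valued in $\Sp^{\heartsuit}$, i.e. it is $0$-truncated and connective for the pointwise $t$-structure on $\PShv(\Arr,\Sp)$; and for such a source $P$ and a \emph{coconnective} target $E$, a Postnikov-tower bookkeeping shows that $\mr{Map}_{\PShv}(P,E)$ is discrete with $\pi_0$ the group of morphisms of Abelian presheaves $\pi_0P\to\pi_0E$ (one uses that $[\Sigma^kP,E]$ is computed by $\mr{Ext}^{-k}$ in the abelian functor category, which vanishes for $k\ge1$). The difficulty is that $\sHbm(d)$ is \emph{not} coconnective on all of $\Arr$: it is coconnective precisely on the full subcategory $\Arr_{\le d}$ of objects $X/S$ with $\dim(f^{-1}(s))\le d$ for every $s$, where Proposition~\ref{vanishprop} forces $\rHbm_m(X/S,d)=0$ for $m>2d$. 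To circumvent this I would use that $z(d)$ is cdh-locally generated by flat cycles (\cite[Theorem~4.2.11]{SV}), hence by cycles whose supports, being equidimensional of dimension $d$ over the base, lie in $\Arr_{\le d}$; thus in the $\infty$-topos $\Shv(\Arr)$ the sheaf $z(d)$ is a colimit of sheaves built from objects of $\Arr_{\le d}$, so that $\mr{Map}_{\Shv(\Arr,\Sp)}(z(d),\sHbm(d))$ depends only on the restriction of $\sHbm(d)$ to $\Arr_{\le d}$, which is coconnective. Combining the two points, $\mr{Map}_{\Shv(\Arr,\Sp)}(z(d),\sHbm(d))$ is discrete, with $\pi_0$ the group of natural transformations $z(-,d)\to\rHbm_{2d}(-,d)$ of Abelian presheaves on $\Arr_{\le d}$. \emph{Making this colimit presentation precise and compatible with the $t$-structure estimates is the step I expect to be the main obstacle.} An alternative is to resolve $z^{\mr{SV}}(d)$ by a bar-type simplicial diagram of linearized representables that can be chosen in $\Arr_{\le d}$ --- using the good $p\mr{dh}$-hypercoverings of~\S\ref{pfmthm} together with the identification $\Lambda\mr{Hilb}(-,d)/I\cong\ul z(-,d)$ of~\eqref{sheafifizH} --- and to run the above estimates levelwise.

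\textbf{Finally I would conclude.} Granting the third step, the space $\mr{Map}_{\Shv(\Arr,\Sp)}(z(d),\sHbm(d))$ is discrete with $\pi_0$ the above group of natural transformations. The maps $\mr{tr}_f$ of~\S\ref{pfmthm} on objects of $\Arr_{\le d}$, which are natural for base change and for proper pushforward by Lemma~\ref{functtrnai}, define such a natural transformation, hence a point $\tau^{\dag}$ of this space; its $\pi_0$-effect on $z_{\mr{equi}}(-,d)\subset z(-,d)$ is $\tau$ directly from the construction of $\tau$ in the proof of Theorem~\ref{mainres}. For essential uniqueness, flat cycles lie in $z_{\mr{equi}}$ and generate $z$ cdh-locally, while $\rHbm_{2d}(-,d)$ restricted to $\Arr_{\le d}$ is a $p\mr{dh}$-sheaf by Lemma~\ref{sHbmprop}-\eqref{sHbmprop-2}; hence a natural transformation $z(-,d)\to\rHbm_{2d}(-,d)$ on $\Arr_{\le d}$ is determined by its restriction to $z_{\mr{equi}}(-,d)$, so $\tau^{\dag}$ is the unique $\pi_0$-class restricting to $\tau$. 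Since in addition all higher homotopy of the mapping space vanishes, the space of $\tau^{\dag}$ as in the statement is contractible, which is the asserted essential uniqueness.
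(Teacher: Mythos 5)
Your overall strategy --- reduce to a mapping-space computation, using that $\sHbm(d)$ is a sheaf, that the source is discrete, and that the target is coconnective where the fibers have dimension $\leq d$ --- is reasonable and shares several ingredients with the paper (sheafness of $\sHbm(d)$ checked pointwise via $\iota_{Y/T}$, the vanishing of Proposition~\ref{vanishprop}/Lemma~\ref{sHbmprop}, existence and uniqueness at the $\pi_0$-level from Section~\ref{pfmthm}). But the step you yourself flag as the main obstacle is a genuine gap, and it is exactly the point where the paper's proof does real work. Saying that flat cycles generate $z(d)$ cdh-locally does not by itself produce a colimit presentation of $z(d)$, in $\Shv(\Arr,\Sp)$, by objects for which mapping into $\sHbm(d)$ only involves its values on $\Arr_{\leq d}$: local generation of $\pi_0$ gives no control of the mapping \emph{spectrum}, and even if you resolve by linearized representables from $\Arr_{\leq d}$, mapping out of a linearization into an arbitrary spectra-valued presheaf is not literal Yoneda, so the claim that the mapping space ``depends only on the restriction of $\sHbm(d)$ to $\Arr_{\leq d}$'' is not established. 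Since both your existence argument (lifting the $\pi_0$-transformation to a point of the mapping space) and your uniqueness argument (discreteness of the mapping space) rest on this reduction, the proposal as written does not close.

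The paper circumvents this with a different and more concrete device, which you may want to compare with your ``alternative''. It never tries to control $\mr{Map}(z(d),\sHbm(d))$ globally: it works with the \emph{presheaf} $\pi_0z_{\mr{equi}}(d)$, constructs the map on $\Arr_d$ using $\pi_0\sHbm(d)|_{\Arr_d}\simeq\tau_{\geq0}\sHbm(d)|_{\Arr_d}$, and then extends to all of $\Arr$ by a left Kan extension along the coCartesian fibration $s^{\mr{op}}\colon\App^{\mr{op}}\rightarrow\Arr^{\mr{op}}$, where $\App\subset\mr{Fun}(\Delta^1,\Arr)$ consists of morphisms with target in $\Arr_d$. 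The key computation is that $\pi_0z_{\mr{equi}}(d)$ \emph{is} a left Kan extension of its restriction: over a fixed $X/S$ the relevant diagram is filtered (cofinally, closed subschemes $Z\subset X$ with $Z\rightarrow S$ in $\Arr_d$), so the derived colimit is the ordinary colimit of Abelian groups, which tautologically equals $z_{\mr{equi}}(X/S,d)$ because every equidimensional cycle is supported on such a $Z$. This replaces your appeal to ``flat cycles generate cdh-locally'' by a statement with no sheaf theory in it, and sheafification enters only at the very end, converting the presheaf map $\pi_0z_{\mr{equi}}(d)\rightarrow\sHbm(d)$ into $\tau^{\dag}\colon z(d)\rightarrow\sHbm(d)$ via \cite[Theorem~4.2.11]{SV} and the sheafness of $\sHbm(d)$. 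If you want to salvage your plan, the missing ingredient is precisely such a filtered-colimit (or Kan-extension) presentation compatible with the $t$-structure estimates; without it, neither the discreteness of the mapping space nor the identification of its $\pi_0$ is justified.
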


\begin{proof}
 Let $\pi_0z_{\mr{equi}}(d)\subset\pi_0z(d)$ be the subsheaf so that the value at $X\rightarrow S$ is $z_{\mr{equi}}(X/S,d)$.
 Note that $\pi_0z_{\mr{equi}}(d)$ is just a notation and {\em not} $\pi_0$ of some presheaf $z_{\mr{equi}}(d)$.
 We first define the trace map for $\pi_0z_{\mr{equi}}(d)$.
 Let $\Arr_d$ be the full subcategory of $\Arr$ consisting of objects $f\colon X\rightarrow S$ such that $\dim(f)\leq d$.
 First, let us construct the map after restricting to $\Arr_d$.
 We have already constructed the map of spectra-valued presheaves
 \begin{equation}
  \label{mapforpa}
   \pi_0z_{\mr{equi}}(d)|_{\Arr_d}
   \xrightarrow{\ \tau\ }
   \pi_0\sHbm(d)|_{\Arr_d}
   \xleftarrow{\ \sim\ }
   \tau_{\geq0}\sHbm(d)|_{\Arr_d}
   \longrightarrow\sHbm(d)|_{\Arr_d}.
 \end{equation}
 Here, the equivalence follows by Lemma \ref{sHbmprop}-\eqref{sHbmprop-2} since we are restricting the functor to $\Arr_d$.
 Now, let the category $\App$ be the full subcategory of $\mr{Fun}(\Delta^1,\Arr)$ spanned by the morphisms $h\colon(X/S)\rightarrow(Y/T)$ in $\Arr$
 such that $(Y/T)$ belongs to $\Arr_d$.
 We have functors $s,t\colon\App\rightarrow\Arr$ where $s$ is the evaluation at $\{0\}\in\Delta^1$, and $t$ is at $\{1\}$.
 Namely, for $h$ above, we have $s(h)=(X/S)$ and $t(h)=(Y/T)$.
 By \cite[Corollary~2.4.7.12]{HTT}, $s$ is a Cartesian fibration.
 Note that we have the natural transform $s\rightarrow t$ and this induces the morphism of functors
 $\phi\colon\mc{F}\circ t^{\mr{op}}\rightarrow\mc{F}\circ s^{\mr{op}}$ for any functor $\mc{F}\colon\Arr^{\mr{op}}\rightarrow\Sp$.
 From now on, we abbreviate $\mc{F}\circ t^{\mr{op}}$, $\mc{F}\circ s^{\mr{op}}$ by $\mc{F}\circ t$, $\mc{F}\circ s$ to avoid heavy notations.
 By \eqref{mapforpa}, we have the map $\pi_0z_{\mr{equi}}(d)\circ t\rightarrow\sHbm(d)\circ t$ of spectra-valued presheaves on $\App$.
 Now, we have the following diagram of $\infty$-categories
 \begin{equation*}
  \xymatrix{
   \App^{\mr{op}}\ar[d]_-{s^{\mr{op}}}\ar[r]^-{F}&\Sp\ar[d]\\
  \Arr^{\mr{op}}\ar[r]&\Delta^0,&
   }
 \end{equation*}
 where $F$ is either $\pi_0z_{\mr{equi}}(d)\circ t$ or $\sHbm(d)\circ t$.
 Since $s$ is a Cartesian fibration, $s^{\mr{op}}$ is a coCartesian fibration.
 Since the $\infty$-category $\Sp$ is presentable, any left Kan extension exists by \cite[Proposition 4.3.2.15]{HTT}.
 We denote by $\mr{LKE}(F)\colon\Arr^{\mr{op}}\rightarrow\Sp$ a left Kan extension of the above diagram.
 We have the following diagram of spectra-valued presheaves:
 \begin{equation*}
  \xymatrix{
  \mr{LKE}(\pi_0z_{\mr{equi}}(d)\circ t)\ar[r]\ar[d]&
  \mr{LKE}(\sHbm(d)\circ t)\ar[d]\\
  \pi_0z_{\mr{equi}}(d)&\sHbm(d).
  }
 \end{equation*}
 Here, the vertical morphisms are defined by taking the adjoint to $\phi$.
 We claim that the left vertical map is equivalent.
 For this, it suffices to show that $\pi_0z_{\mr{equi}}(d)$ is in fact a left Kan extension of $\pi_0z_{\mr{equi}}(d)\circ t$.
 Let $(X/S)\in\Arr$, and we denote by $\App_{X/S}$ the fiber of $s$ over $(X/S)$.
 Since $s^{\mr{op}}$ is a coCartesian fibration, by invoking \cite[Proposition 4.3.3.10]{HTT}, it suffices to show that $\bigl(\pi_0z_{\mr{equi}}(d)\bigr)(X/S)$
 is a left Kan extension of $\bigl(\pi_0z_{\mr{equi}}(d)\circ t\bigr)|_{\App^{\mr{op}}_{X/S}}$ along the canonical map $\App^{\mr{op}}_{X/S}\rightarrow{\{X/S\}}$ for any $X/S$.
 This amounts to showing that the morphism of {\em spectra} (namely the colimit is the ``derived colimit'')
 \begin{equation*}
  \indlim_{D\in\App^{\mr{op}}_{X/S}}z_{\mr{equi}}\bigl(t(D)/S,d\bigr)
   \longrightarrow
   z_{\mr{equi}}(X/S,d)
 \end{equation*}
 is an equivalence. 
 Let $C(X/S)$ be the category of closed immersions $Z\hookrightarrow X$ such that the composition $Z\rightarrow S$ is in $\Arr_d$.
 The category $\App^{\mr{op}}_{X/S}$ is filtered and the inclusion $C(X/S)\hookrightarrow\App^{\mr{op}}_{X/S}$ cofinal.
 Thus, the colimit is t-exact by \cite[Proposition 1.3.2.7]{SAG}, and it suffices to show the morphism
 $\indlim_{Z\in C(X/S)}z_{\mr{equi}}\bigl(Z/S,d\bigr)\rightarrow z_{\mr{equi}}(X/S,d)$ of {\em Abelian groups} is an isomorphism.
 This follows by definition.
 Thus we have the map $\pi_0z_{\mr{equi}}(d)\rightarrow\sHbm(d)$ of spectra-valued presheaves.

 Finally, let us extend this map to the required map.
 The $\infty$-presheaf $\sHbm(d)$ on $\Arr$ is in fact an $\infty$-sheaf.
 Indeed, let $\sHbm(d)\rightarrow L\sHbm(d)$ be the localization morphism.
 We must show that this morphism is an equivalence.
 Recall that for an $\infty$-category $\mc{C}$, a simplicial set $S$, and a morphism $\alpha\colon F\rightarrow G$ in $\mr{Fun}(S,\mc{C})$,
 $\alpha$ is an equivalence if and only if $\alpha(s)$ is an equivalence for any vertex $s$ of $S$.
 We believe that this is well-known, but a (fairly) indirectly way to see this is by applying \cite[Corollary 5.1.2.3]{HTT}
 to the diagram $(\Delta^0)^{\triangleright}\rightarrow\mr{Fun}(S,\mc{C})$ given by $\alpha$.
 Now, let $(Y/T)\in\Arr$.
 Since the verification is pointwise by the recalled fact,
 it suffices to show that $\sHbm(d)\circ\iota_{Y/T}\rightarrow(L\sHbm(d))\circ\iota_{Y/T}$ is an equivalence.
 By \eqref{cocontdia}, this morphism can be identified with $\iota^*_{Y/T}\sHbm(d)\rightarrow L\bigl(\iota_{Y/T}^*\sHbm(d)\bigr)$,
 which is an equivalence since $\iota^*_{X/T}\sHbm(d)\simeq\ulrHbm(X/S,d)$ is already a cdh-sheaf (\emph{cf.} Lemma~\ref{sHbmprop}-\eqref{sHbmprop-1}).
 Thus, by taking the sheafification to the morphism $\pi_0z_{\mr{equi}}(d)\rightarrow\sHbm(d)$,
 we get the morphism $z(d)\rightarrow\sHbm(d)$.
 The essential uniqueness follows by construction, and the detail is left to the reader.
\end{proof}


\end{document}